\font\smallit=cmti10
\renewcommand\section{\@startsection {section}{1}{\z@}
{-30pt \@plus -1ex \@minus -.2ex}
{2.3ex \@plus.2ex}
{\normalfont\normalsize\bfseries}}
\renewcommand\subsection{\@startsection{subsection}{2}{\z@}
{-3.25ex\@plus -1ex \@minus -.2ex}
{1.5ex \@plus .2ex}
{\normalfont\normalsize\bfseries}}
\renewcommand{\@seccntformat}[1]{\csname the#1\endcsname. }
\numberwithin{equation}{section}
\newtheorem{prop}{Proposition}
\newtheorem{thm}{Theorem}
\newtheorem{lem}{Lemma}
\newtheorem{col}{Corollary}
\numberwithin{thm}{section}
\numberwithin{prop}{section}
\numberwithin{col}{thm}
\numberwithin{lem}{section}
\begin{document}

\begin{center}
\uppercase{\bf Extended Congruences for Harmonic Numbers}
\vskip 20pt
{\bf Ren\'{e} Gy}\\
{\smallit (Bondy, France)}\\
{\tt rene.gy@numericable.com}\\
\vskip 10pt
\end{center}
\vskip 30pt

\vskip 30pt

\centerline{\bf Abstract}
\noindent
We derive $p$-adic expansions for the generalized Harmonic numbers $H^{(j)}_{p-1}$ and $H^{(j)}_{\frac{p-1}{2}}$ involving the Bernoulli numbers $B_j$  and the the base-2 Fermat quotient $q_p$. While most of our results are not new, we obtain them elementarily, without resorting to the theory of $p$-adic L-functions as was the case previously. Moreover, we show that  
\begin{equation*}\sum_{j=0}^{n-1}\left(\frac{(2^{j+1}-1)}{(j+1)}\frac{(2^{j+2}-1)}{(j+2)}\frac{B_{j+2}}{2^{j}}H^{(j+1)}_{\frac{p-1}{2}}+2(-1)^j\frac{q_p^{j+1}}{j+1}\right)p^j\equiv 0 \pmod {p^n} \end{equation*} holds under the condition that $p >\frac{n+1}{2}$. This is another generalization, modulo any prime power, of the old $p$-congruence  $H_{\frac{p-1}{2}}+2q_p \equiv 0 \bmod p$ attributed to Eisenstein, which is stronger than the one which has been published recently \cite{Lin18}.

\pagestyle{myheadings} 

\thispagestyle{empty} 
\baselineskip=12.875pt 
\vskip 30pt
\section{Introduction}
It is well-known that the Wolstenholme theorem can be stated as a congruence for the {\it Harmonic number} $H_{p-1}$ where $p \ge5$ is a prime number:  

\begin{equation}\label{1.1}H_{p-1}:=\sum_{j=1}^{p-1}\frac{1}{j} \equiv 0 \pmod{p^2}.\end{equation} 

\noindent There also exists (\cite{Mestrovic13}, Lemma 14) a refinement of  \eqref{1.1} valid for $p \ge7$: 
\begin{equation}\label{1.2}H_{p-1} + \frac{p}{2}H^{(2)}_{p-1}  \equiv 0 \pmod{p^4},\end{equation} 
\noindent where $H^{(m)}_{n}:=\sum_{j=1}^{n}\frac{1}{j^m}$ is the {\it generalized Harmonic number}, ($H^{(1)}_n=H_{n}$).\\
\noindent Another well-known congruence, due to Eisenstein \cite{Eisenstein1850}, reads 
\begin{equation}\label{1.3}H_\frac{p-1}{2} + 2\cdot q_p  \equiv 0 \pmod{p},\end{equation}
\noindent where $p\ge3$ is a prime number and $q_p=\frac{2^{p-1}-1}{p}$ is the base-$2$  {\it Fermat quotient}.\\
\noindent It happens that this congruence may also be refined \cite{Lehmer38} in the following way:
\begin{equation}\label{1.4}H_\frac{p-1}{2} + 2\cdot q_p-p\cdot q_p^2  \equiv 0 \pmod{p^2}.\end{equation}
\noindent Generalizations of these congruences modulo $p^n$ for arbitrary large $n$ can be obtained from the theory of $p$-adic L-function \cite{Washington98}, \cite{Lin18}. In the present paper, we will show how such generalizations are also obtained more elementarily and we will provide sufficient conditions on $p$ ensuring their validity for even higher powers of $p$. \\
\noindent Notations and useful Lemmas are presented in Section 2. Section 3 is devoted to the cases of $H^{(j)}_{p-1}$ and $H^{(2j)}_{\frac{p-1}{2}}$. Section 4 deals with the more difficult case of $H^{(j)}_{\frac{p-1}{2}}$ for odd $j$.

\section{Notations and useful preliminaries}
In this paper, $g, h, i, j, k, n, m$ are non-negative integers, $p$ is a prime number and $x$ the argument of a generating function. In addition to the notations already given in the introduction for the Harmonic numbers and the base-$2$ Fermat quotient, we let $\delta^j_i=1$ when $i=j$ and  $\delta^j_i=0$ otherwise, we let $\binom{n}{k}$ be the usual binomial coefficient, $B_n$ a {\it Bernouilli number} ($B_0=1$, $B_1=-1$, $B_2=\frac{1}{6}$, ..), and $\mathbf{i}^2=-1$. The following classical results will be used and are stated without proof.\\
(i) We have the generalized Binomial Theorem:  
\begin{equation}\label{e2a} \frac{1}{(1-x)^k} =\sum_{j\ge0}\binom{j+k-1}{j}x^j.\end{equation}
(ii) Let $ n\ge 1$, and $1\le j\le p-1$, we have the Euler Theorem:
\begin{equation}\label{} j^{p^{n-1}(p-1)} \equiv 1 \pmod{p^n}.\end{equation}
(iii) We will use the Legendre formula which states that the highest power of $p$ which divides $j!$ is ${\frac{j-s_p(j)}{p-1}}$, where $s_p(j)$ is the sum of the base-$p$ digits of $j$.\\
(iv) We shall also make use of some classical properties of the Bernoulli numbers.\\
They have the following exponential generating function: 
\begin{align}
\label{e2bb} \frac{x}{e^x-1}&=\sum_{n\ge0}B_n\frac{x^n}{n!},\end{align}
and they obey the following recurrence relation:
\begin{align}
\label{e2b} \sum_{0\le k\le n}B_k\binom{n}{k} &= (-1)^n B_n.\end{align}
They vanish at odd indices, larger than $1$, so that
\begin{align}\label{e2bbis} B_{2n+1}&=0  \text { \ \ \ \  when } n\ge 1, \end{align}
and we have the Faulhaber formula for the sum of consecutive $i$-th powers: 
\begin{align}\label{e2d} \sum_{j=1}^n j^i&=\frac{1}{i+1}\sum_{h=0}^i(-1)^h\binom{i+1}{h} B_h n^{i+1-h}.\end{align}
We will also need the Kummer congruence: let $h,k$ not divisible by $p-1$, such that $ h\equiv k \pmod{p-1} $, we have
\begin{equation}\label{e2ez} \frac{B_h}{h}-\frac{B_k}{k} \equiv 0 \pmod {p}.\end{equation}
\noindent We will make use of the Von Staudt-Clausen theorem: let $D(B_{2j})$ be the denominator of $B_{2j}$ in the reduced form, we have
\begin {equation}\label{e2e} D(B_{2j})=\prod_{p-1\vert 2j}p. \end{equation}
\noindent Finally, recall that $(p, 2k)$ is an {\it irregular pair} when $p \ge 2k+3$ and $B_{2k}\equiv 0 \pmod p$.\\
\ \\
\noindent In addition to the above classical results stated without proof, we shall make use of some lemmas, given hereafter with their proofs for the sake of self-containment.

\begin{lem}\label{lem1}  Let $k$ be a positive natural integer, we have the following identities:
\begin{align}\label{azd} \sum_{j=1}^k \binom{2k-1}{2j-1}B_{2j} &=\frac{1}{2}+B_{2k}+B_{2k-1},\\
\label{azd1} \sum_{j=1}^k \binom{2k}{2j-1}B_{2j} &=\frac{1}{2}-B_{2k},\\
\label{azdd} \sum_{j=1}^k \binom{2k+1}{2j-1}B_{2j} &=\frac{1}{2},\\
\label{azdd1} \sum_{j=1}^k \binom{2k+2}{2j-1}B_{2j} &=\frac{1}{2}-(2k+3)B_{2k+2}.\end{align}
\begin{proof} The proofs only make use of  \eqref{e2b}. We have
\begin{align*}
 \sum_{j=1}^k \binom{2k-1}{2j-1}B_{2j} =& \sum_{j=1}^k \binom{2k}{2j}B_{2j} - \sum_{j=1}^k \binom{2k-1}{2j}B_{2j} \\
=& \sum_{j=0}^{2k} \binom{2k}{j}B_{j}-1+k -\sum_{j=0}^{2k-1} \binom{2k-1}{j}B_{j}+1-\frac{1}{2}(2k-1)\\
=& B_{2k}+B_{2k-1}+\frac{1}{2},\\
\sum_{j=1}^k  \binom{2k}{2j-1}B_{2j} =& \sum_{j=1}^k  \binom{2k+1}{2j}B_{2j} - \sum_{j=1}^k  \binom{2k}{2j}B_{2j} \\
=& \sum_{j=0}^{2k+1}  \binom{2k+1}{j}B_{j}-1+\frac{2k+1}{2}-B_{2k+1}\\
&-\sum_{j=0}^{2k}  \binom{2k}{j}B_{j}+1-(2k)\frac{1}{2} =\frac{1}{2}-B_{2k},\\
\sum_{j=1}^k\binom{2k+1}{2j-1}B_{2j} =&\sum_{j=1}^{k+1}\binom{2k+1}{2j-1}B_{2j}-B_{2k+2}\\
=&B_{2k+2}+B_{2k+1}+\frac{1}{2}-B_{2k+2}=\frac{1}{2},\\
\sum_{j=1}^k \binom{2k+2}{2j-1}B_{2j} =&\sum_{j=1}^{k+1} \binom{2k+2}{2j-1}B_{2j}-(2k+2)B_{2k+2}\\
=& \frac{1}{2}-B_{2k+2}-(2k+2)B_{2k+2}=\frac{1}{2}-(2k+3)B_{2k+2}.
\end{align*}
\end{proof}
\end{lem}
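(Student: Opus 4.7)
The plan is to reduce each of the four identities to the Bernoulli recurrence \eqref{e2b}, using Pascal's rule to trade odd-indexed binomial coefficients for even-indexed ones, and then killing the odd-indexed Bernoulli terms via \eqref{e2bbis}.

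First I would handle \eqref{azd} and \eqref{azd1} by applying Pascal's identity to write
$\binom{2k-1}{2j-1} = \binom{2k}{2j} - \binom{2k-1}{2j}$ and $\binom{2k}{2j-1} = \binom{2k+1}{2j} - \binom{2k}{2j}$. This turns each target sum into a difference of two sums of the shape $S(N) := \sum_{1 \le j,\ 2j \le N} \binom{N}{2j} B_{2j}$. Then for each relevant $N$, I would compute $S(N)$ by completing to the full Bernoulli recurrence $\sum_{j=0}^{N}\binom{N}{j}B_j = (-1)^N B_N$: the missing even-indexed term is just $B_0 = 1$, and by \eqref{e2bbis} the only odd-indexed terms that survive are $j = 1$ (contributing $\binom{N}{1}B_1 = -N/2$) and, when $N$ itself is odd and $\ge 1$, $j = N$ (contributing $B_{N}$). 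Forming the appropriate difference $S(N_1) - S(N_2)$ and collecting the $\tfrac12$, $B_{2k}$, and $B_{2k-1}$ constants produces the right-hand sides of \eqref{azd} and \eqref{azd1}.

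For \eqref{azdd} and \eqref{azdd1}, the cleanest route is a one-term extension of the summation. Since $\binom{2k+1}{2k+1} = 1$ and $\binom{2k+2}{2k+1} = 2k+2$, the sums $\sum_{j=1}^{k+1}\binom{2k+1}{2j-1}B_{2j}$ and $\sum_{j=1}^{k+1}\binom{2k+2}{2j-1}B_{2j}$ are precisely instances of \eqref{azd} and \eqref{azd1} with $k$ replaced by $k+1$. Subtracting off the added boundary terms $B_{2k+2}$ and $(2k+2)B_{2k+2}$ respectively, and using \eqref{e2bbis} to discard the $B_{2k+1}$ that appears from the shifted \eqref{azd}, yields the advertised right-hand sides $\tfrac{1}{2}$ and $\tfrac{1}{2}-(2k+3)B_{2k+2}$.

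The main obstacle is nothing deep: it is simply careful bookkeeping of the $j = 0$ boundary term, the $j = 1$ contribution from $B_1 = -\tfrac12$, and — when the upper index is odd — the isolated $B_{N}$ term. No new identity beyond \eqref{e2b} and \eqref{e2bbis} is required, and the four cases are knit together by the two-step strategy of (a) Pascal-splitting to reach even-index sums, and (b) shifting $k \mapsto k+1$ to bootstrap the last two identities from the first two.
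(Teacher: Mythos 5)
Your proposal is correct and follows essentially the same route as the paper's own proof: Pascal's rule $\binom{N}{2j-1}=\binom{N+1}{2j}-\binom{N}{2j}$ to reduce \eqref{azd} and \eqref{azd1} to the recurrence \eqref{e2b} (tracking the $j=0$, $j=1$, and odd top-index boundary terms), and then the shift $k\mapsto k+1$ with subtraction of the added term to deduce \eqref{azdd} and \eqref{azdd1}. The only point to watch in the write-up is not to double-count the $j=1$ term when $N=1$, but this does not affect the validity of the argument.
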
 

\begin{lem}\label{ww} Let $k\ge0$ be an integer, it holds that
\begin{align}\label{aze1} \sum_{0\le j}B_j(2^j -1)\binom{k}{j} &= (-1)^k B_k(1-2^k),\\
\label{aze1bis} \sum_{0\le j}B_j2^j \binom{k}{j} &=2B_k(1-2^{k-1}).\end{align}
\begin{proof}
Via the generating function \eqref{e2bb}, we have
\begin{align*}  \sum_{0\le k}B_k(2^k -1)\frac{x^k}{k!} =\sum_{0\le k}B_k\frac{(2x)^k}{k!} -\sum_{0\le k}B_k\frac{x^k}{k!} = \frac{2x}{e^{2x}-1}-\frac{x}{e^x-1}&=\frac{-x}{e^{x}+1}.\end{align*}
Recall \cite{Wilf92} that when $f(x)$ is the exponential generating function of the sequence $a_k$, then $e^xf(x)$ is the exponential generating function of the sequence $b_k=\sum_{0\le j }\binom{k}{j}a_j$. Then
\begin{align*}\sum_{0\le k}\sum_{0\le j}\binom{k}{j}B_j(2^j -1)\frac{x^k}{k!} =e^x\frac{-x}{e^x+1} &=\frac{-x}{e^{-x}+1},\end{align*}
and, on the other hand, from the the first line, we also have
\begin{align*} -\sum_{0\le k}B_k(2^k -1)\frac{(-x)^k}{k!} &=\frac{-x}{e^{-x}+1},
\end{align*} 
and \eqref{aze1} follows from the identification of the coefficients in both series expansions for $\frac{-x}{e^{-x}+1}$. For the proof of \eqref{aze1bis}, it is easy to directly check its validity for $k=0$ and $k=1$, and for $k>1$ it follows from \eqref{aze1} and \eqref{e2bbis}.
\end{proof} 
\end{lem}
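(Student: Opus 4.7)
The plan is to prove \eqref{aze1} via exponential generating functions and then reduce \eqref{aze1bis} to it. Starting from the classical EGF \eqref{e2bb} and rescaling $x\mapsto 2x$, I would form the EGF of $B_j(2^j-1)$ by subtraction:
\[
\sum_{j\ge0}B_j(2^j-1)\frac{x^j}{j!} \;=\; \frac{2x}{e^{2x}-1}-\frac{x}{e^x-1}.
\]
Using $e^{2x}-1=(e^x-1)(e^x+1)$, the right-hand side collapses in one line to $\dfrac{-x}{e^x+1}$. This is the key simplification that unlocks the whole argument.

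Next, I would invoke the standard binomial-transform fact: if $f(x)=\sum_j a_j x^j/j!$ then $e^x f(x)$ is the EGF of $\bigl(\sum_j \binom{k}{j}a_j\bigr)_k$. Applied here,
\[
\sum_{k\ge 0}\Bigl(\sum_j\binom{k}{j}B_j(2^j-1)\Bigr)\frac{x^k}{k!}\;=\;e^x\cdot\frac{-x}{e^x+1}\;=\;\frac{-x}{e^{-x}+1},
\]
after dividing numerator and denominator by $e^x$. The crucial observation is that $\tfrac{-x}{e^{-x}+1}$ equals $-g(-x)$ where $g(x)=\tfrac{-x}{e^x+1}$ is the EGF already computed above. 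Hence $\tfrac{-x}{e^{-x}+1}=\sum_{k\ge 0}(-1)^{k+1}B_k(2^k-1)\tfrac{x^k}{k!}$, and identifying coefficients of $x^k/k!$ in the two expressions yields \eqref{aze1}.

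For \eqref{aze1bis} the efficient route is to avoid redoing the EGF work and instead decompose $B_j 2^j = B_j(2^j-1)+B_j$. Summing against $\binom{k}{j}$, the first piece contributes $(-1)^k B_k(1-2^k)$ by \eqref{aze1}, while the second contributes $(-1)^k B_k$ by the Bernoulli recurrence \eqref{e2b}. Adding gives $(-1)^k B_k(2-2^k) = 2(-1)^k B_k(1-2^{k-1})$. For even $k$ this matches the claim; for odd $k\ge 3$ both sides are zero by \eqref{e2bbis}; and the small cases $k=0,1$ can be checked by direct substitution. I do not foresee a real obstacle: the entire argument hinges on the algebraic identity $\tfrac{2x}{e^{2x}-1}-\tfrac{x}{e^x-1}=\tfrac{-x}{e^x+1}$ and the near-symmetry of this function under $x\mapsto -x$. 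The only mildly delicate step is the sign reconciliation at the end of \eqref{aze1bis}, which is handled by a brief case split on the parity of $k$.
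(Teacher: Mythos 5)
Your proof is correct and follows essentially the same route as the paper: the same EGF identity $\frac{2x}{e^{2x}-1}-\frac{x}{e^x-1}=\frac{-x}{e^x+1}$, the same binomial-transform step via multiplication by $e^x$, and the same coefficient identification under $x\mapsto -x$. For \eqref{aze1bis} your decomposition $B_j2^j=B_j(2^j-1)+B_j$ combined with \eqref{e2b} and the parity check is just an explicit spelling-out of the paper's terse reduction to \eqref{aze1} and \eqref{e2bbis}.
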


\begin{lem}\label{2.3}  Let $k\ge 1$ be an integer, we have
\begin{equation}\label{aze12} \sum_{j=0}^{2k-1}\binom{2k-1}{j}(2^j-1)(2^{j+1}-1)\frac{B_{j+1}}{j+1} = (2^{2k}-1)\frac{B_{2k}}{2k}.\end{equation}
\begin{proof}
We start from
\begin{align*} 
\frac{x}{2} \tan\frac{x}{2}&=\frac{\mathbf{i}x}{2}\frac{e^{-\frac{\mathbf{i}x}{2}}-e^{\frac{\mathbf{i}x}{2}}}{e^{-\frac{\mathbf{i}x}{2}}+e^{\frac{\mathbf{i}x}{2}}}=\frac{\mathbf{i}x}{2}\frac{1-e^{\mathbf{i}x}}{1+e^{\mathbf{i}x}}
=\frac{\mathbf{i}x}{e^{\mathbf{i}x}+1}-\frac{1}{2}\mathbf{i}x\\
&=-\sum_{0\le k}B_k(2^k -1)\frac{(\mathbf{i}x)^k}{k!}-\frac{1}{2}\mathbf{i}x \text{  \ \   from the proof of Lemma \ref{ww}}\\
&=-\sum_{2\le k}B_k(2^k -1)\frac{(\mathbf{i}x)^k}{k!}\\
&=-\sum_{1\le k}B_{2k}(2^{2k} -1)\frac{(\mathbf{i}x)^{2k}}{(2k)!} \text{  \ \   since } B_{k+1}=0 \text{ for even $k\ge 2$ and then}\\
\tan\frac{x}{2}&=2\sum_{1\le k}(-1)^{k-1}(2^{2k} -1)\frac{B_{2k}}{2k}\frac{x^{2k-1}}{(2k-1)!}.
\end{align*}
Then, 
\begin{align*}
\tan x-\tan\frac{x}{2}&= 2\sum_{1\le k}\mathbf{i}^{k-1}(2^k -1)(2^{k+1} -1)\frac{B_{k+1}}{k+1}\frac{x^{k}}{k!},\\
\mathbf{i}\frac{\tan x-\tan\frac{x}{2}}{2}&=\sum_{0\le k}(2^k -1)(2^{k+1} -1)\frac{B_{k+1}}{k+1}\frac{(\mathbf{i}x)^{k}}{k!},\\
\mathbf{i}e^{\mathbf{i}x}\frac{\tan x-\tan\frac{x}{2}}{2}&=\sum_{0\le k}\sum_{0\le j}\binom{k}{j}(2^j -1)(2^{j+1} -1)\frac{B_{j+1}}{j+1}\frac{(\mathbf{i}x)^{k}}{k!}.
\end{align*}
We take the imaginary part, so that
\begin{align*}
\cos x \frac{\tan\frac{x}{2}-\tan x}{2}&=\sum_{1\le k}\sum_{0\le j }\binom{2k-1}{j}(2^j -1)(2^{j+1} -1)\frac{B_{j+1}}{j+1}(-1)^k\frac{x^{2k-1}}{(2k-1)!},\end{align*}
which is
\begin{align*}\tan\frac{x}{2}&=2\sum_{1\le k}\sum_{0\le j }\binom{2k-1}{j}(2^j -1)(2^{j+1} -1)\frac{B_{j+1}}{j+1}(-1)^{k-1}\frac{x^{2k-1}}{(2k-1)!}.
\end{align*}
The desired result is obtained by comparison of the coefficients in both series expansions for $\tan\frac{x}{2}$ which have been obtained in the above derivations. \end{proof} 
\end{lem}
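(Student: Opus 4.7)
The plan is to interpret both sides as Taylor coefficients of exponential generating functions and match them. Setting $a_j := (2^j-1)(2^{j+1}-1)\frac{B_{j+1}}{j+1}$, the left-hand side of \eqref{aze12} is $\sum_{j=0}^{2k-1}\binom{2k-1}{j}a_j$, which by the elementary $e^x$-convolution identity (the same one invoked in the proof of Lemma \ref{ww}) equals $(2k-1)!$ times the coefficient of $x^{2k-1}$ in $e^x A(x)$, where $A(x)=\sum_{j\ge 0}a_j x^j/j!$. The strategy is therefore: put $A(x)$ in closed form, multiply by $e^x$, and read off the coefficient of $x^{2k-1}/(2k-1)!$.

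First I would close up $A(x)$. Using the expansion $(2^j-1)(2^{j+1}-1)=2\cdot 4^j-3\cdot 2^j+1$ together with the identity $\sum_{j\ge 0}\frac{B_{j+1}}{(j+1)!}x^j=\frac{1}{e^x-1}-\frac{1}{x}$ (obtained by dividing $\frac{x}{e^x-1}-1$ by $x$), the three $1/x$ singular pieces cancel and one gets
$$A(x)=\frac{2}{e^{4x}-1}-\frac{3}{e^{2x}-1}+\frac{1}{e^x-1}.$$
A short partial-fraction calculation in the variable $u=e^x$, using $\frac{2}{u^4-1}=\frac{1}{u^2-1}-\frac{1}{u^2+1}$ and $\frac{1}{u-1}-\frac{2}{u^2-1}=\frac{1}{u+1}$, collapses this to the compact form $A(x)=\frac{1}{e^x+1}-\frac{1}{e^{2x}+1}$.

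Next I would extract only the odd Taylor coefficients of $e^x A(x)$. A direct manipulation yields
$$e^x A(x)-\tfrac{1}{2}\tanh(x/2)=\tfrac{1}{2}-\tfrac{1}{2\cosh x},$$
and the right-hand side is an even function of $x$, so $e^x A(x)$ and $\tfrac{1}{2}\tanh(x/2)$ share the same coefficient at every odd power of $x$. Writing $\tfrac{1}{2}\tanh(x/2)=\tfrac{1}{2}-\tfrac{1}{e^x+1}$ and reading off coefficients from the formula $\sum_k(2^k-1)B_k\frac{x^k}{k!}=-\frac{x}{e^x+1}$ already derived in the proof of Lemma \ref{ww} (together with $B_{2j+1}=0$ for $j\ge 1$), the coefficient of $\frac{x^{2k-1}}{(2k-1)!}$ in $\tfrac{1}{2}\tanh(x/2)$ is exactly $(2^{2k}-1)\frac{B_{2k}}{2k}$, which is the right-hand side of \eqref{aze12}.

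The main obstacle is the partial-fraction collapse in Step 1: it depends crucially on the numerical coefficients $(2,-3,1)$ produced by the factorization of $(2^j-1)(2^{j+1}-1)$, and it is the step that turns an unwieldy combination of three $(e^{nx}-1)^{-1}$ factors into a clean two-term expression. Once that simplification is in hand, the parity argument and the final coefficient extraction are essentially mechanical, and no appeal to $\tan$ or to the complex unit $\mathbf{i}$ is needed.
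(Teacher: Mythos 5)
Your proof is correct; I checked the partial-fraction collapse ($\tfrac{2}{u^4-1}-\tfrac{3}{u^2-1}+\tfrac{1}{u-1}=\tfrac{1}{u+1}-\tfrac{1}{u^2+1}$ with $u=e^x$, after the three $1/x$ poles cancel), the identity $e^xA(x)-\tfrac12\tanh(x/2)=\tfrac12-\tfrac{1}{2\cosh x}$, and the final coefficient extraction, and all are sound. Your argument is the same skeleton as the paper's --- represent the left side as the coefficient of $x^{2k-1}/(2k-1)!$ in $e^xA(x)$ via the exponential convolution, then match odd coefficients against the generating function $\sum_k(2^{2k}-1)\tfrac{B_{2k}}{2k}\tfrac{x^{2k-1}}{(2k-1)!}$ --- but the execution differs at two points. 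First, the paper obtains the closed form of $A$ indirectly, by recognizing the sequence inside the combination $\tan x-\tan\tfrac{x}{2}$ (after substituting $x\mapsto \mathbf{i}x$), whereas you compute $A(x)=\tfrac{1}{e^x+1}-\tfrac{1}{e^{2x}+1}$ head-on from the expansion $(2^j-1)(2^{j+1}-1)=2\cdot4^j-3\cdot2^j+1$; your route makes it transparent where the specific coefficients matter, while the paper's hides that in a trigonometric identity. Second, where the paper extracts odd coefficients by taking the imaginary part of $\mathbf{i}e^{\mathbf{i}x}\cdot\tfrac{\tan x-\tan(x/2)}{2}$ and then needs the identity $\cos x\,(\tan\tfrac{x}{2}-\tan x)=-\tan\tfrac{x}{2}$ to close the loop, you observe that $e^xA(x)$ differs from $\tfrac12\tanh(x/2)$ by an even function --- the same fact in real clothing, and arguably cleaner since it avoids $\mathbf{i}$ entirely and stays within the generating functions already established in Lemma \ref{ww}.
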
 

\begin{lem}\label{binom}   Let $p$ be prime and $n\ge 1$, $i,j\ge0$  integers, we have
$$j!\binom{p^{n-1}(p-1)-i}{j} \equiv (-1)^j j!\binom{i+j-1}{j} \pmod{p^{n-1}}.$$ 
\begin{proof}We write the factors of the binomial coefficient on the left hand side and reduce modulo $p^{n-1}$, so that
\begin{align*}j!\binom{p^{n-1}(p-1)-i}{j} &=(p^{n-1}(p-1)-i)\cdot \cdot \cdot (p^{n-1}(p-1)-i-j+1)\\
&\equiv (-1)^j j!\binom{i+j-1}{j} \pmod{p^{n-1}}.\end{align*}
\end{proof} 
\end{lem}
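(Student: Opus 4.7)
The plan is to unwind $j!\binom{p^{n-1}(p-1)-i}{j}$ into its falling-factorial form and then reduce each of the $j$ factors modulo $p^{n-1}$ individually. Concretely, I would write
\[
j!\binom{p^{n-1}(p-1)-i}{j}=\prod_{k=0}^{j-1}\bigl(p^{n-1}(p-1)-i-k\bigr),
\]
and note that $p^{n-1}(p-1)\equiv 0 \pmod{p^{n-1}}$, so each factor in this product satisfies $p^{n-1}(p-1)-i-k\equiv -(i+k) \pmod{p^{n-1}}$. Multiplying the $j$ congruences together gives
\[
\prod_{k=0}^{j-1}\bigl(p^{n-1}(p-1)-i-k\bigr)\equiv (-1)^{j}\prod_{k=0}^{j-1}(i+k) \pmod{p^{n-1}}.
\]

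The second step is just recognition: the right-hand side product is the rising factorial $i(i+1)\cdots(i+j-1)$, which equals $\dfrac{(i+j-1)!}{(i-1)!}=j!\binom{i+j-1}{j}$. Combining this with the previous congruence yields the claim exactly as stated. No case distinction by the sign of $i$ or the size of $j$ is needed, because the congruence is between integers and both sides are polynomials in $i$ of degree $j$.

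The only subtlety worth flagging is the edge case $i=0$ with $j\geq 1$: then the right-hand side contains the factor $i=0$, so it vanishes, while on the left-hand side the factor $p^{n-1}(p-1)-0=p^{n-1}(p-1)$ is $\equiv 0 \pmod{p^{n-1}}$, so both sides are $\equiv 0$ modulo $p^{n-1}$. This is automatic from the factor-by-factor reduction above and does not need separate treatment. Honestly, I do not expect any genuine obstacle in this lemma: the whole argument is a one-line identity between falling factorials, and the mod-$p^{n-1}$ reduction is applied to each factor independently, which is legal because all factors are integers and the modulus is fixed throughout.
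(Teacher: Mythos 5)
Your proof is correct and follows exactly the paper's route: expand $j!\binom{p^{n-1}(p-1)-i}{j}$ as the falling factorial $(p^{n-1}(p-1)-i)\cdots(p^{n-1}(p-1)-i-j+1)$, reduce each factor to $-(i+k)$ modulo $p^{n-1}$, and identify the resulting rising factorial with $(-1)^j j!\binom{i+j-1}{j}$. The paper compresses this into one displayed line; your version merely makes the factor-by-factor reduction and the edge case $i=0$ explicit.
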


\begin{lem}\label{lemjohn}   Let $p$ be prime, $j\ge 1$ an integer and $v_p(k)$ the higest power of $p$ which divides the integer $k$, we have
$$v_p\left( \frac{p^{j-1}}{j!}\right) \ge (j-1)\frac{p-2}{p-1}.$$ 
In particular, $p$ divides $\frac{p^{j-1}}{j!}$ when $p$ is odd and $j\ge2$.
\begin{proof}  We reproduce the proof from \cite{Johnson75}. We have
$ v_p\left( \frac{p^{j-1}}{j!}\right)=j-1- v_p\left( j!\right)= j-1 - \frac{j-s_p(j)}{p-1}$, by Legendre formula. But clearly, $s_p(j) \ge 1$, and the claim follows.
\end{proof} 
\end{lem}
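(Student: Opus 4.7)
The plan is to reduce the claim to a single application of Legendre's formula, recalled as item (iii) of Section 2. Since $v_p$ is additive on nonzero rationals, one has
\[
v_p\!\left(\frac{p^{j-1}}{j!}\right) = (j-1) - v_p(j!),
\]
and Legendre's formula gives $v_p(j!) = (j - s_p(j))/(p-1)$. Combining these and rearranging the numerator $(j-1)(p-1) - j + s_p(j) = (j-1)(p-2) + s_p(j) - 1$ yields
\[
v_p\!\left(\frac{p^{j-1}}{j!}\right) = (j-1)\frac{p-2}{p-1} + \frac{s_p(j)-1}{p-1}.
\]

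The main inequality then follows from the trivial observation that $s_p(j) \ge 1$ for every positive integer $j$, since the base-$p$ expansion of $j \ge 1$ must contain at least one nonzero digit.

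For the ``in particular'' statement, I would exploit that $v_p(\cdot)$ takes integer values on nonzero rationals. When $p$ is odd and $j \ge 2$, the main lower bound $(j-1)\frac{p-2}{p-1}$ is already strictly positive (being at least $\tfrac12$), so the integer $v_p(p^{j-1}/j!)$ must be at least $1$.

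I do not expect any substantive obstacle: the whole argument is a one-line consequence of Legendre's formula together with $s_p(j) \ge 1$. If anything, the only mild subtlety will be the integrality step used to upgrade the fractional lower bound to outright divisibility by $p$ in the corollary.
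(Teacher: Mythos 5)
Your proof is correct and follows exactly the paper's argument: apply Legendre's formula to get $v_p(p^{j-1}/j!) = j-1-\frac{j-s_p(j)}{p-1}$ and use $s_p(j)\ge 1$. The only difference is that you spell out the integrality step needed for the ``in particular'' clause, which the paper leaves implicit; that is a welcome clarification rather than a deviation.
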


\begin{lem}\label{2.5}   Let $p$ be an odd prime and $n\ge 1$, $h \ge0$  integers. We have the two following congruences:
\begin{equation}\label {251} pB_{p^{n-1} (p-1)}\equiv p-1 \pmod {p^n}, \end {equation} 
\begin{equation}\label {252}pB_{p^{n-1}(p-1)-2h} \equiv H^{(2h)}_{p-1} \pmod{p}. \end{equation}
\begin{proof}
We make use of the Faulhaber formula \eqref{e2d} with  $i=p^{n-1} (p-1)-2h=2m$ an even exponent. After some manipulation, accounting for the vanishing of the odd-indexed Bernoulli numbers from $B_3$, we have 
\begin{align*}\label{} \sum_{j=1}^pj^{2m}&=pB_{2m} +2m\sum_{g=1}^m \frac{pB_{2m-2g}\binom{2m-1}{2g-1}}{2g(2g+1)} p^{2g} + \frac{1}{2}p^{2m}.\end{align*}
From Euler theorem, $\sum_{j=1}^pj^{2m} \equiv  H^{(2h)}_{p-1} \bmod p^n $, then
\begin{align*}
pB_{2m}&\equiv  H^{(2h)}_{p-1}-(p^{n-1} (p-1)-2h)\sum_{g=1}^m \frac{pB_{2m-2g}\binom{2m-1}{2g-1}}{2g(2g+1)} p^{2g} + \frac{1}{2}p^{2m} \pmod {p^n}.   \end{align*}
But, when $ n\ge 2h $, $ 2m= p^{n-1} (p-1)-2h\ge p^{n-1} (p-1)-n  \ge n$ as soon as $p\ge 3$. Also $pB_{2m-2g}\binom{2m-1}{2g-1} $ is $p$-integral and, for $g \ge 1$, $\frac{p^{2g}}{2g(2g+1)} $ is divisible by $p$, by Lemma \ref{lemjohn}. Then
\begin{align*}
pB_{p^{n-1} (p-1)-2h}&\equiv H^{(2h)}_{p-1}+2h\sum_{g=1}^m \frac{pB_{2m-2g}\binom{2m-1}{2g-1}}{2g(2g+1)} p^{2g} \pmod {p^n}.\end{align*}
When $h=0$, this readily establishes \eqref{251}, and when $h\neq 0$, we have \eqref{252}.
\end{proof} 
\end{lem}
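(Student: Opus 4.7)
The plan is to evaluate $S := \sum_{j=1}^{p} j^{2m}$ in two different ways, with $2m := p^{n-1}(p-1) - 2h$ (we need $n \ge 2h$ so that $2m \ge 0$, and in fact $2m \ge n$). Applying Faulhaber's formula \eqref{e2d} with $i = 2m$ at $n = p$, peeling off the $B_0$ and $B_1 = -\tfrac{1}{2}$ contributions separately and using \eqref{e2bbis} to kill every other odd-indexed Bernoulli, I would rewrite $S$ as $pB_{2m}$ plus a correction $2m\sum_{g\ge 1}\frac{pB_{2m-2g}\binom{2m-1}{2g-1}}{2g(2g+1)}\,p^{2g}$ plus a tail $\tfrac{1}{2}p^{2m}$. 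On the other side, Euler's theorem gives $j^{p^{n-1}(p-1)}\equiv 1 \pmod{p^n}$ for $1 \le j \le p-1$, so $j^{2m}\equiv j^{-2h}\pmod{p^n}$ and $\sum_{j=1}^{p-1} j^{2m} \equiv H^{(2h)}_{p-1} \pmod{p^n}$; the $j = p$ contribution $p^{2m}$ is killed modulo $p^n$ since $2m \ge n$.

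Setting the two evaluations equal yields $pB_{2m} + (\text{correction}) \equiv H^{(2h)}_{p-1} \pmod{p^n}$, and everything then hinges on controlling the correction $p$-adically. The plan is to argue that each $pB_{2m-2g}$ is $p$-integral by Von Staudt--Clausen \eqref{e2e} (the denominator of a Bernoulli number is squarefree), that $\binom{2m-1}{2g-1}$ is of course an integer, and that $\frac{p^{2g}}{2g(2g+1)}$ is divisible by $p$ for every $g \ge 1$ when $p$ is odd. This last point is exactly the content of Lemma \ref{lemjohn} applied to $\frac{p^{2g}}{(2g+1)!}$, after absorbing the harmless factorial factors that remain.

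The main obstacle, and the reason we only get the weaker modulus in \eqref{252}, is that each correction term is only guaranteed to be divisible by $p$, not by $p^n$. When $h = 0$, however, the entire correction is premultiplied by $2h$ and therefore vanishes identically, so no control is needed and the full strength mod $p^n$ survives; combined with $H^{(0)}_{p-1} = p - 1$ this gives \eqref{251}. When $h \ne 0$, the prefactor $2h$ does not help us gain powers of $p$, and we are left with $pB_{2m} \equiv H^{(2h)}_{p-1} \pmod{p}$, which is \eqref{252}. The small tail $\tfrac{1}{2}p^{2m}$ (and the $p^{2m+1}/(2m+1)$ term from the $B_0$ contribution, unless one absorbs it into the $g = m$ slot of the correction sum) is $\equiv 0\pmod{p^n}$ because $2m \ge n$, so it obstructs neither case.
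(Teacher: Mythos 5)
Your proposal is correct and follows essentially the same route as the paper: Faulhaber at $i=2m=p^{n-1}(p-1)-2h$, Euler's theorem to identify the sum with $H^{(2h)}_{p-1}$ modulo $p^n$, and Von Staudt--Clausen together with Lemma \ref{lemjohn} to show each correction summand is divisible by $p$. One point to state explicitly: the correction is premultiplied by $2m=p^{n-1}(p-1)-2h$, not by $2h$; it is only because each summand is already divisible by $p$ that the $p^{n-1}(p-1)$ part contributes a multiple of $p^n$ and the effective prefactor becomes $-2h$ modulo $p^n$ --- so even in the case $h=0$ the correction does not ``vanish identically'' and the $p$-integrality control is still needed to kill it modulo $p^n$.
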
 

\begin{lem}\label{2.6}    
	Let $n\ge 1$ be an integer and  $p$ an odd prime such that $p > \frac{n+1}{2}$. We have the following congruence: 
\begin{equation}\label{e2.5a}  \frac{ 2^{p^{n-1}(p-1)}-1  }{p^{n}} \equiv \sum_{j=0}^{n-1}(-1)^j\frac{q_p^{j+1}}{j+1}p^j +\delta^{n+1}_p q_pp^{n-1}\pmod {p^n}.\end{equation}
\begin{proof} This is mainly taken from Lemma 2.8 in \cite{Sun08}, but our condition on $p$ is less restrictive than in  \cite{Sun08}. We start with
\begin{align*} \frac{ 2^{p^{n-1}(p-1)}-1  }{p^{n}}&= \frac{ (1+p\cdot q_p)^{p^{n-1}}-1}{p^n}= \frac{1}{p^{n}}\sum_{j=1}^{p^{n-1}}\binom{p^{n-1}}{j}p^jq^j_p\\
&=q_p+\sum_{j=2}^{p^{n-1}}(p^{n-1}-1)\cdot \cdot \cdot (p^{n-1}-j+1)\frac{p^{j-1}}{j!}q^j_p.
\end{align*}
By Lemma \ref{lemjohn}, we have $ \frac{p^{j-1}}{j!}\equiv 0 \pmod p$, when $j\ge2$, then
\begin{align*} \frac{ 2^{p^{n-1}(p-1)}-1  }{p^{n}}&  \equiv  q_p+\sum_{j=2}^{p^{n-1}}(-1)\cdot \cdot \cdot (-j+1)\frac{p^{j-1}}{j!}q^j_p \pmod {p^{n}}\\
&\equiv  \sum_{j=1}^{p^{n-1}}(-1)^{j-1}\frac{p^{j-1}}{j}q^j_p \\
&\equiv  \sum_{j=0}^{n-1}(-1)^{j}\frac{p^{j}}{j+1}q^{j+1}_p +p^{n} \sum_{j=n+1}^{p^{n-1}}(-1)^{j-1}\frac{p^{j-(n+1)}}{j}q^j_p\\
&\equiv  \sum_{j=0}^{n-1}(-1)^{j}\frac{p^{j}}{j+1}q^{j+1}_p +p^{n}\sum_{\underset{j\equiv0\bmod p}{j=n+1}}^{p^{n-1}} (-1)^{j-1}\frac{p^{j-(n+1)}}{j}q^j_p \pmod {p^{n}}.
\end{align*}
To finish the proof, we consider now the summands in the second term of the latter congruence and we show that they are all $p$-integral, except when $j=n+1=p$.
Since $j \equiv 0\bmod p$, let $m >0$ be the highest integer such that $j=k\cdot p^m $. We have $j-(n+1)-m = k\cdot p^m -(n+1)-m  \ge p^m -(n+1)-m$. Suppose first that $m\ge2$. Then we have $p^m -m -(n+1)\ge 2p-1-(n+1) \ge 0$ since $p^m -m \ge 2p-1$ when  $m \ge 2, \ p\ge 3$, and since  $p\ge \frac {n+2}{2}$ by hypothesis. Then the corresponding summands in second term on the right hand side in the latter congruence are $p$-integral.
Suppose now that $m=1$. Then, we have
 \begin{align*}j-(n+1)-m = k\cdot p -(n+1)-1 &= (k-1)p +p -(n+1)-1\\&\ge (k-1)\frac {n+2}{2} -\frac {n+2}{2} \ge 0 \text{, if $k \ge 2$.} \end{align*}
Then, when $m=1, k\ge2$, the corresponding summands in second term on the right hand side in the latter congruence are also $p$-integral. There remains the case $m=k=1$. Then, $j=p$ and $ j-(n+1)-m= j-(n+2) \ge 0$ if $j \ge n+2$, and then the only case which is left is $j=n+1=p$. Finally, the last term on the right hand side of \eqref{e2.5a} is justified, since $q_p^p \equiv q_p \bmod p$.\\
	\end{proof} 
	\end{lem}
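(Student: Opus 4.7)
The plan is to rewrite $2^{p^{n-1}(p-1)}=(1+pq_p)^{p^{n-1}}$ using the definition of the Fermat quotient, then expand by the binomial theorem. Dividing through by $p^n$ and pulling out the $j=1$ contribution gives
$$\frac{2^{p^{n-1}(p-1)}-1}{p^n} \;=\; q_p + \sum_{j=2}^{p^{n-1}} (p^{n-1}-1)(p^{n-1}-2)\cdots(p^{n-1}-j+1)\,\frac{p^{j-1}}{j!}\,q_p^j.$$
Lemma \ref{lemjohn} ensures $p\mid p^{j-1}/j!$ for every $j\ge 2$, so when each product $(p^{n-1}-i)$ is expanded, every term that retains a factor of $p^{n-1}$ supplies an additional power of $p$ and vanishes modulo $p^n$. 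The product therefore collapses to $(-1)^{j-1}(j-1)!$ modulo the relevant power of $p$, yielding
$$\frac{2^{p^{n-1}(p-1)}-1}{p^n} \;\equiv\; \sum_{j=1}^{p^{n-1}} (-1)^{j-1}\,\frac{p^{j-1}}{j}\,q_p^j \pmod{p^n}.$$

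Next I would split this sum at $j=n$. The head $1\le j\le n$ reindexes directly to $\sum_{j=0}^{n-1}(-1)^j\,\frac{q_p^{j+1}}{j+1}\,p^j$, which is the main term of \eqref{e2.5a}. For the tail $j\ge n+1$, the task is to show the summand is $\equiv 0 \pmod{p^n}$ except when $j=p=n+1$. This amounts to verifying $j-v_p(j)\ge n+1$. The case $p\nmid j$ is immediate. Otherwise, writing $j=kp^m$ with $\gcd(k,p)=1$ and $m\ge 1$, I would handle three subcases: (a) $m\ge 2$, where $p^m-m\ge p^2-2\ge 2p-1\ge n+1$ using $(p-1)^2\ge 2$ for $p\ge 3$ together with the hypothesis $2p\ge n+2$; (b) $m=1,\,k\ge 2$, where $kp-1\ge 2p-1\ge n+1$; and (c) $m=k=1$, i.e.\ $j=p$, where the required inequality is $p\ge n+2$ and fails exactly when $p=n+1$.

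In that single exceptional case, the surviving summand is $(-1)^{p-1}\,\frac{p^{p-1}}{p}\,q_p^p = p^{n-1}q_p^p$, and Fermat's little theorem gives $q_p^p\equiv q_p\pmod p$, whence $p^{n-1}q_p^p\equiv p^{n-1}q_p\pmod{p^n}$; this reproduces the correction $\delta^{n+1}_p\,q_p\,p^{n-1}$ in the statement. I expect the main obstacle to be the tail analysis: the hypothesis $p>\frac{n+1}{2}$ is tight, so the elementary inequalities in (a)--(c) must be checked carefully to confirm that $j=p=n+1$ is the only configuration that escapes the $p^n$-vanishing, and one must also be attentive that the $p^{n-1}$ factors shed in the binomial reduction are killed uniformly by the factor of $p$ furnished by Lemma \ref{lemjohn}.
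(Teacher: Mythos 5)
Your proposal follows the paper's proof essentially step for step: the same binomial expansion of $(1+pq_p)^{p^{n-1}}$, the same use of Lemma \ref{lemjohn} to collapse the falling factorial modulo $p^{n-1}$, the same split of the sum at $j=n$, the same three-case valuation analysis of the tail via $j=kp^m$, and the same treatment of the lone exceptional term $j=p=n+1$ via $q_p^p\equiv q_p\pmod p$. The argument is correct and matches the paper's; your reformulation of the tail condition as $j-v_p(j)\ge n+1$ is just a cleaner packaging of the same inequalities.
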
 
	
\section{Extended congruences for $H^{(j)}_{p-1}$  and $H^{(2j)}_{\frac{p-1}{2}}$   ($j\ge 1$) }
We present now a first kind of $p$-adic expansions for $H^{(j)}_{p-1}$ and $H^{(2j)}_{\frac{p-1}{2}}$ ($j\ge 1$) in terms of higher order Harmonic numbers and a second one, with higher order Harmonic numbers as well, but also involving Bernoulli numbers. These results are not new, except possibly for Theorem \ref{H2kp-1/2}. Hereafter, they are obtained with elementary algebric and arithmetic arguments and with the classical Falhauber formula \eqref{e2d} for sums of consecutive powers in terms of Bernoulli numbers.
\begin{thm}\label{prop4}    
Let  $ k \ge 1$ an integer and $p$ a prime number, the generalized Harmonic numbers $H^{(k)}_{p-1}$ is expanded in the following  p-adically converging sum of powers of $p$:
\begin{align}\label{e10ee}   H^{(k)}_{p-1}&=(-1)^k \sum_{j\ge0} \binom {j+k-1}{j}H^{(k+j)}_{p-1}p^j.\end{align}
Moreover, when $p$ is odd, we have
\begin{align}\label{e10eed}H^{(2k)}_{p-1}&=2H^{(2k)}_{\frac{p-1}{2}}+ \sum_{j\ge 1} \binom {j+2k-1}{j}H^{(2k+j)}_{\frac{p-1}{2}}p^j,\\
\label{e10eee}   H^{(k)}_{p-1}&= \frac{1+(-1)^k}{2^{k}}H^{(k)}_{\frac{p-1}{2}}+ (-1)^k \sum_{j\ge1} \binom {j+k-1}{j}\frac{1}{2^{j+k}}H^{(k+j)}_{\frac{p-1}{2}}p^j,\\
\label{e10eeeff}  2(2^{2k}-1) H^{(2k)}_{\frac{p-1}{2}}&=-\sum_{j\ge1} \binom {j+2k-1}{j}\frac{2^{2k+j}-1}{2^j}H^{(2k+j)}_{\frac{p-1}{2}}p^j.\end{align}
\begin{col}\label{Remark0}  
From these series, one readily obtains the following congruences, valid but for odd $p$:\\

\begin{align}\label{e10eeez}H^{(2k-1)}_{p-1}&\equiv 0 \pmod {p},\\
\label{e10eeezz}H^{(2k-1)}_{p - 1}+p(2k-1)H^{(2k)}_{\frac{p-1}{2}}&\equiv 0 \pmod{p^2},\\
\label{e9a}   H^{(2k-1)}_{p-1}+\frac{1}{2}p(2k-1)H^{(2k)}_{p-1}  &\equiv 0 \pmod {p^2},  \\
\label{e10eeea}H^{(2k)}_{p-1}-2H^{(2k)}_{\frac{p-1}{2}}- p\cdot 2kH^{(2k+1)}_{\frac{p-1}{2}} &\equiv 0\pmod {p^2},\\
\label{e10eeee} H^{(2k)}_{\frac{p-1}{2}}(2^{2k}-1)+p\cdot \frac{k}{2}(2^{2k+1} -1)H^{(2k+1)}_{\frac{p-1}{2}}&\equiv 0 \pmod{p^2},\\
\label{e10eeeb} 2H^{(2k)}_{\frac{p-1}{2}}-(2^{2k+1} -1)H^{(2k)}_{p-1} &\equiv 0\pmod{p^2}.
\end{align}
\end{col} \end{thm}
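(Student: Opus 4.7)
The plan is to derive all four identities of Theorem \ref{prop4} from one common mechanism. For any $m \in \{1, \ldots, p-1\}$, $1/m$ is a $p$-adic unit, so the generalized Binomial Theorem \eqref{e2a} gives $\frac{1}{(p-m)^k} = \frac{(-1)^k}{m^k}(1 - p/m)^{-k} = (-1)^k \sum_{j \ge 0} \binom{j+k-1}{j} p^j/m^{k+j}$, a series converging in $\mathbb{Z}_p$. The interchange of the $m$-sum (over an appropriate index range) with the $j$-sum is then legitimate, and $\sum_m m^{-(k+j)}$ produces the harmonic number over that range.

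For \eqref{e10ee}, I substitute $i = p - m$ in $H^{(k)}_{p-1} = \sum_{i=1}^{p-1} i^{-k}$, letting $m$ run over all of $\{1, \ldots, p-1\}$. For \eqref{e10eed}, I split $\{1, \ldots, p-1\}$ at the midpoint: the lower half $i \le (p-1)/2$ contributes $H^{(2k)}_{(p-1)/2}$ directly, while the upper half, rewritten as $i = p - m$ with $m \le (p-1)/2$, is expanded as above (with $(-1)^{2k} = 1$); the two $j = 0$ contributions combine to give the coefficient $2$. For \eqref{e10eee}, I partition $\{1, \ldots, p-1\}$ by parity: even elements $i = 2m$ contribute $2^{-k} H^{(k)}_{(p-1)/2}$ directly, while odd elements $i = p - 2m$ are handled by the same expansion but with $m$ replaced by $2m$, introducing the factor $2^{-(k+j)}$; the two $j = 0$ terms combine into the prefactor $(1 + (-1)^k)/2^k$. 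Finally, \eqref{e10eeeff} is obtained by equating \eqref{e10eed} with the $k \mapsto 2k$ specialization of \eqref{e10eee}, subtracting term by term, and multiplying by $2^{2k}$ to clear denominators.

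The six congruences in Corollary \ref{Remark0} follow by truncating the relevant identity modulo $p$ or $p^2$. Congruence \eqref{e10eeez} is the reduction of \eqref{e10ee} modulo $p$ at $k \mapsto 2k-1$ (the $j=0$ term gives $-H^{(2k-1)}_{p-1}$, so doubling and dividing by $2$ yields $0$); \eqref{e9a} is its refinement modulo $p^2$; \eqref{e10eeezz} follows from \eqref{e9a} after replacing $H^{(2k)}_{p-1}$ by $2 H^{(2k)}_{(p-1)/2}$ via the modulo-$p$ form of \eqref{e10eed}; \eqref{e10eeea} is the truncation of \eqref{e10eed} modulo $p^2$; \eqref{e10eeee} is the truncation of \eqref{e10eeeff} modulo $p^2$ divided by $2$; and \eqref{e10eeeb} is obtained by eliminating $H^{(2k+1)}_{(p-1)/2}$ between \eqref{e10eeea} and \eqref{e10eeee}, where the coefficient arithmetic $2(2^{2k+1}-1) - 4(2^{2k}-1) = 2$ yields the final constant. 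The only real obstacle is bookkeeping --- tracking $(-1)^k$ signs and powers of $2$, and verifying that each partition of $\{1, \ldots, p-1\}$ is a genuine bijection (a routine consequence of $p$ being odd) --- since the mathematical content is essentially this single binomial expansion applied to three different index ranges.
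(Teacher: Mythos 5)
Your proposal is correct and follows essentially the same route as the paper: every identity comes from expanding $1/(p-m)^k$ via the generalized binomial theorem \eqref{e2a} over a suitable index set, \eqref{e10eeeff} is obtained by eliminating $H^{(2k)}_{p-1}$ between \eqref{e10eed} and the even case of \eqref{e10eee}, and the corollary follows by truncation exactly as you describe (your coefficient check $2(2^{2k+1}-1)-4(2^{2k}-1)=2$ for \eqref{e10eeeb} is right). The only harmless deviation is in \eqref{e10eee}, where the paper reflects within $\{1,\dots,\frac{p-1}{2}\}$ via $i\mapsto\frac{p+1}{2}-i$ and then re-invokes \eqref{e10ee}, whereas you partition $\{1,\dots,p-1\}$ by parity and expand the odd part directly --- a slightly more economical bookkeeping of the same computation.
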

\noindent{\bf Remark.} In the case $k=1$ \eqref{e10ee} and \eqref{e10eeeff} reads, respectively,
\begin{align}\label{e10eex}   H_{p-1}+\frac{1}{2}\sum_{j\ge1} H^{(j+1)}_{p-1}p^j &=0,\\
\label{e10eeeffx} H^{(2)}_{\frac{p-1}{2}}+\sum_{j\ge1} \frac{j+1}{6\cdot 2^j} (2^{j+2}-1)H^{(j+2)}_{\frac{p-1}{2}}p^j&=0.\end{align}
\ 
\begin{proof}[Proof of Theorem \ref{prop4}] We have
\begin{align*} H^{(k)}_{p-1}&=\sum_{i=1}^{p-1}\frac{1}{i^k}=\sum_{1\le i \le p-1}\frac{1}{(p-i)^k}=(-1)^k\sum_{i=1}^{p-1}\frac{1}{i^k(1-\frac{p}{i})^k}\\
&=(-1)^k\sum_{i=1}^{p-1}\sum_{j\ge 0}\binom {j+k-1}{j}\frac{p^j}{i^{k+j}}    \text {     \ \ \      from \eqref{e2a}}\\
&=(-1)^k\sum_{j\ge 0}p^j\sum_{i=1}^{p-1}\binom {j+k-1}{j}\frac{1}{i^{k+j}}\\
&=(-1)^k \sum_{j\ge0} \binom {j+k-1}{j}H^{(k+j)}_{p-1}p^j.\\
H^{(2k)}_{p-1}&=\sum_{i=1}^{p-1}\frac{1}{i^{2k}}=\sum_{i=1}^\frac{p-1}{2}\frac{1}{i^{2k}}+\sum_{i=1}^\frac{p-1}{2}\frac{1}{(p-i)^{2k}}=\sum_{i=1}^\frac{p-1}{2}\frac{1}{i^{2k}}+\sum_{i=1}^\frac{p-1}{2}\frac{1}{i^{2k}(1-\frac{p}{i})^{2k}}\\
&=2H^{(2k)}_{\frac{p-1}{2}}+ \sum_{i=1}^\frac{p-1}{2}\sum_{j\ge 1}\binom {j+2k-1}{j}\frac{p^j}{i^{2k+j}}    \text {     \ \ \      from \eqref{e2a}}\\
&=2H^{(2k)}_{\frac{p-1}{2}}+ \sum_{j\ge 1} \binom {j+2k-1}{j}H^{(2k+j)}_{\frac{p-1}{2}}p^j.\\
H^{(k)}_{\frac{p-1}{2}}&=\sum_{i=1}^\frac{p-1}{2}\frac{1}{i^k}=\sum_{i=1}^\frac{p-1}{2}\frac{1}{(\frac{p+1}{2}-i)^k}=(-2)^k\sum_{i=1}^\frac{p-1}{2}\frac{1}{(2i-1)^k(1-\frac{p}{2i-1})^k}\\
&=(-2)^k\sum_{i=1}^\frac{p-1}{2}\sum_{j\ge 0}\binom {j+k-1}{j}\frac{p^j}{(2i-1)^{k+j}}    \text {     \ \ \      from \eqref{e2a}}\\
&=(-2)^k\sum_{j\ge 0}p^j\binom {j+k-1}{j}\sum_{i=1}^\frac{p-1}{2}\frac{1}{(2i-1)^{k+j}}\\
&=(-2)^k\sum_{j\ge 0}\binom {j+k-1}{j}\left(\sum_{i=1}^{p-1}\frac{1}{i^{k+j}}- \sum_{i=1}^\frac{p-1}{2}\frac{1}{(2i)^{k+j}}\right)p^j \\
&=(-2)^k \sum_{j\ge0} \binom {j+k-1}{j}\left(H^{(k+j)}_{p-1}-\frac{1}{2^{k+j}}H^{(k+j)}_{\frac{p-1}{2}}\right)p^j  \\
&=2^k H^{(k)}_{p-1}- (-1)^k \sum_{j\ge0} \binom {j+k-1}{j}\frac{1}{2^{j}}H^{(k+j)}_{\frac{p-1}{2}}p^j \text{ by using \eqref{e10ee}}\\
&=2^k H^{(k)}_{p-1}- (-1)^k H^{(k)}_{\frac{p-1}{2}}+\sum_{j\ge1} \binom {j+k-1}{j}\frac{1}{2^{j}}H^{(k+j)}_{\frac{p-1}{2}}p^j.
\end{align*}
After rearragement, this gives  \eqref{e10eee} and then \eqref{e10eeeff} is obtained by comparing \eqref{e10eed} to \eqref{e10eee} in the even case, and eliminating $H^{(2k)}_{p-1}$.
\end{proof}

\  \\


\begin{thm}\label{prop3}    
Let  $ k \ge 1$ a natural integer, and $p$ a prime. When $p \ge 2k+3$, the following congruences hold: 
\begin{align}\label{e8bbf}  H^{(2k)}_{p - 1}&\equiv p\frac{2k}{2k+1}B_{p-1-2k}\pmod {p^2},\\
\label{e10eeeb1} H^{(2k)}_{\frac{p-1}{2}}&\equiv p\frac{k(2^{2k+1} -1)}{2k+1}B_{p-1-2k} \pmod{p^2},\\
\label{e9bb} H^{(2k-1)}_{p-1}&\equiv-p^2\frac{k(2k-1)}{2k+1}B_{p-1-2k} \pmod{p^3},\\
\label{e9bbs} H^{(2k+1)}_{\frac{p-1}{2}}&\equiv  \frac{2(1-4^k)}{2k+1}B_{p-1-2k}\pmod {p}.
\end{align}
Moreover, the last one is also valid when $p = 2k+1$. 
\end{thm}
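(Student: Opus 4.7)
The plan is to prove \eqref{e8bbf} by a Faulhaber-plus-Kummer argument that sharpens the proof of Lemma \ref{2.5}, then to derive \eqref{e10eeeb1} and \eqref{e9bb} by substituting \eqref{e8bbf} into the Corollary \ref{Remark0} identities, and finally to prove \eqref{e9bbs} by a parallel direct Faulhaber computation on $\sum_{i=1}^{(p-1)/2} i^{2m+1}$.

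For \eqref{e8bbf}, set $2m = p(p-1)-2k$ and apply Faulhaber's formula \eqref{e2d} to $\sum_{j=1}^{p} j^{2m}$ exactly as in the proof of Lemma \ref{2.5}. Since $p \ge 2k+3 \ge 5$, Lemma \ref{lemjohn} gives $v_p(p^{2g}/(2g(2g+1))) \ge 2$ for every $g \ge 1$, so all $g \ge 1$ correction terms and the tail $\tfrac{1}{2}p^{2m}$ vanish modulo $p^2$. Euler's theorem yields $j^{2m} \equiv j^{-2k} \pmod{p^2}$ for $1 \le j \le p-1$, so the left-hand side is $\equiv H^{(2k)}_{p-1} \pmod{p^2}$, whence $H^{(2k)}_{p-1} \equiv pB_{p(p-1)-2k} \pmod{p^2}$. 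Since $2 \le p-1-2k \le p-3$, Kummer's congruence \eqref{e2ez} applies and yields $B_{p(p-1)-2k}/(p(p-1)-2k) \equiv B_{p-1-2k}/(p-1-2k) \pmod p$; reducing $p(p-1)-2k \equiv -2k$ and $p-1-2k \equiv -(2k+1) \pmod p$ then produces \eqref{e8bbf}.

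The Corollary \ref{Remark0} identity \eqref{e10eeeb}, namely $2H^{(2k)}_{\frac{p-1}{2}} \equiv (2^{2k+1}-1)H^{(2k)}_{p-1} \pmod{p^2}$, yields \eqref{e10eeeb1} upon substituting \eqref{e8bbf}. For \eqref{e9bb}, truncating \eqref{e10ee} with $k$ replaced by $2k-1$ modulo $p^3$ gives
\begin{equation*}
2H^{(2k-1)}_{p-1} \equiv -(2k-1)p\, H^{(2k)}_{p-1} - k(2k-1) p^2\, H^{(2k+1)}_{p-1} \pmod{p^3};
\end{equation*}
the last term vanishes by \eqref{e10eeez}, and substituting \eqref{e8bbf} into the middle term yields \eqref{e9bb}.

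For \eqref{e9bbs}, I apply Faulhaber's formula \eqref{e2d} directly to $\sum_{i=1}^{(p-1)/2} i^{2m+1}$ with $2m+1 = p(p-1)-2k-1$. Using $(p-1)/2 \equiv -1/2 \pmod p$ and the vanishing of $B_{2m+1}$, the right-hand side reduces mod $p$ to $\frac{1}{(2m+2)2^{2m+2}} \sum_{h=0}^{2m} \binom{2m+2}{h} B_h 2^h$; applying \eqref{aze1bis} with $k=2m+2$ and subtracting the $h=2m+2$ contribution, this equals $\frac{B_{2m+2}(1-2^{2m+2})}{(m+1) 2^{2m+2}}$. With $2^{2m+2} \equiv 2^{-2k} \pmod p$ by Fermat and Kummer rewriting $B_{2m+2}/(m+1) \equiv -\frac{2}{2k+1} B_{p-1-2k} \pmod p$ (valid since $2k < p-1$), this simplifies to $\frac{2(1-4^k)}{2k+1} B_{p-1-2k}$, while Euler's theorem identifies the left side with $H^{(2k+1)}_{\frac{p-1}{2}} \pmod p$. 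The boundary case $p = 2k+1$ must be treated separately because Kummer fails: there $B_{p-1-2k} = B_0 = 1$, and $H^{(p)}_{\frac{p-1}{2}} \equiv H_{\frac{p-1}{2}} \equiv -2q_p \pmod p$ by Fermat and Eisenstein's congruence \eqref{1.3}, while $4^k - 1 = 2^{p-1}-1 = p q_p$ makes the claimed right-hand side also equal to $-2q_p$. The main technical obstacle is the careful $p$-adic bookkeeping that allows the Faulhaber correction terms to be discarded modulo $p^2$; a secondary subtlety is that the route via Corollary \ref{Remark0} would fail for \eqref{e9bbs} when $p \mid 2^{2k+1}-1$, which is why the direct Faulhaber argument is preferable.
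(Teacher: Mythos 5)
Your derivations of \eqref{e8bbf}, \eqref{e10eeeb1} and \eqref{e9bb} are correct. For \eqref{e8bbf} you take a slightly different route from the paper, which squares the Fermat relation $\frac{1}{i^{2k}}\equiv 2i^{p-1-2k}-i^{2p-2-2k}\pmod{p^2}$ and applies Faulhaber with the two exponents $p-1-2k$ and $2p-2-2k$, whereas you apply Euler's theorem modulo $p^2$ with the single exponent $p(p-1)-2k$; both reduce to the same Kummer step, and your $p$-adic estimates for the correction terms are sound precisely because the argument of the Faulhaber polynomial there is $p$ itself, so every correction term carries visible powers of $p$. The derivations of \eqref{e10eeeb1} and \eqref{e9bb} coincide with the paper's.

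The proof of \eqref{e9bbs} has a genuine gap. You evaluate Faulhaber's formula \eqref{e2d} at $n=\frac{p-1}{2}$ and assert that, because $\frac{p-1}{2}\equiv-\frac12\pmod p$, the right-hand side reduces modulo $p$ to $\frac{1}{(2m+2)2^{2m+2}}\sum_h\binom{2m+2}{h}B_h2^h$. This term-by-term substitution is not justified: for every $h$ that is a positive multiple of $p-1$, the coefficient $B_h$ has $p$ in its denominator by Von Staudt--Clausen, while $\left(\frac{p-1}{2}\right)^{2m+2-h}-\left(-\frac12\right)^{2m+2-h}$ is divisible by $p$ only to the first order, so the individual error $\frac{1}{2m+2}\binom{2m+2}{h}B_h\bigl[(\tfrac{p-1}{2})^{2m+2-h}-(-\tfrac12)^{2m+2-h}\bigr]$ is $p$-integral but in general \emph{not} congruent to $0$ modulo $p$; for instance with $p=5$, $k=1$, $h=16$ it equals $-3617/16\equiv 3\pmod 5$. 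These errors do cancel in the aggregate, but proving that is exactly the content of the paper's Proposition in Section 4: one must first reorganize the Faulhaber sum into the exact identity \eqref{4.2}, in which each correction term carries a factor $\frac{p^{m+1}}{(m+1)!}m!\binom{i}{m}B_{i-m}$ whose $p$-adic valuation is then controlled by Lemma \ref{lemjohn}. As written, your reduction skips this step entirely. Moreover, the detour is unnecessary: the paper obtains \eqref{e9bbs} for $p\ge2k+3$ purely algebraically, by inserting \eqref{e8bbf} and \eqref{e10eeeb1} into \eqref{e10eeea} and dividing by $p\cdot 2k$; your stated objection to that route (a division by $2^{2k+1}-1$) does not apply, since no such division occurs there. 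Your treatment of the boundary case $p=2k+1$ agrees with the paper's.
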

	
\begin{proof}\label{} All this is already well-known: see for example \cite{Glaisher1900}, \cite{Lehmer38}, \cite{Bayat97},  \cite{Sun00}. From Fermat little theorem, for $1\le i \le p-1$, we have $\frac{1}{i^{k}}-\frac{i^{p-1}}{i^{k}} \equiv 0 \pmod p$,
hence, by squaring, we have $(\frac{1}{i^{k}}-i^{p-1-k})^2 \equiv 0 \pmod {p^2}$, that is $\frac{1}{i^{2k}} \equiv 2i^{p-1-2k}-i^{2p-2-2k} \pmod {p^2}$ and then
	\begin{align*}H^{(2k)}_{p - 1}& \equiv 2\sum_{i=1}^{ p - 1}i^{p-1-2k}-\sum_{i=1}^{ p - 1}i^{2p-2-2k}\pmod {p^2},\end{align*}
but from \eqref{e2d} we have  $$\sum_{i=1}^{p}i^{h}\equiv p(-1)^hB_h +p^2(-1)^{h-1} \frac{h}{2}B_{h-1}\pmod {p^3},$$ that is, if $h \ge3$,
$$\sum_{i=1}^{ p - 1}i^{h}\equiv p(-1)^hB_h +p^2(-1)^{h-1} \frac{h}{2}B_{h-1}\pmod {p^3}.$$ 
Hence, as the odd-index Bernoulli numbers are zero, we have, for $h \ge2$,
$$\sum_{1\le i \le p-1}i^{2h}\equiv pB_{2h} \pmod {p^3}.$$  
And then
\begin{align*}H^{(2k)}_{p - 1}& \equiv 2\sum_{i=1}^{ p - 1}i^{p-1-2k}-\sum_{i=1}^{ p - 1}i^{2p-2-2k}\equiv p(2B_{p-1-2k}-B_{2p-2-2k})\pmod {p^2}.\end{align*}
But, since $p\ge 2k+3$, $p-1-2k$ is not divisible by $p-1$ then, from Kummer congruence \eqref{e2ez}, we see that $B_{2p-2-2k} \equiv  \frac{2k+2}{2k+1}B_{p-1-2k} \bmod p$  and therefore
$$H^{(2k)}_{p - 1}\equiv p\frac{2k}{2k+1}B_{p-1-2k}\pmod {p^2},$$ which is \eqref{e8bbf}. Then from \eqref{e10eeeb}, we have
$$ H^{(2k)}_{\frac{p-1}{2}}\equiv p\frac{k(2^{2k+1} -1)}{2k+1}B_{p-1-2k} \pmod{p^2},$$ which is \eqref{e10eeeb1}. We also have
\begin{align*}H^{(2k-1)}_{p-1}&=\sum_{i=1}^{p-1} \frac{1}{i^{2k-1}}=\sum_{i=1}^{p-1} \frac{1}{(p-i)^{2k-1}}=-\sum_{i=1}^{p-1} \frac{1}{i^{2k-1}(1-\frac{p}{i})^{2k-1}}\\
&\equiv -H^{(2k-1)}_{p-1}-p(2k-1)H^{(2k)}_{p-1}-p^2\frac{(2k)(2k-1)}{2}H^{(2k+1)}_{p-1}  \pmod{p^3}.\end{align*}
Then, rearranging and accounting for \eqref{e8bbf}, we obtain \eqref{e9bb} as 
 \begin{align*}
H^{(2k-1)}_{p -1}&\equiv-p\frac{2k-1}{2}H^{(2k)}_{p-1} \pmod{p^3}\\
&\equiv-p^2\frac{k(2k-1)}{2k+1}B_{p-1-2k} \pmod{p^3}. \end{align*}
For the case $p\ge2k+3$, \eqref{e9bbs} is obtained starting from \eqref{e10eeea} and taking \eqref{e8bbf} and \eqref{e10eeeb1} into account
\begin{align*}H^{(2k)}_{p-1}- 2H^{(2k)}_{\frac{p-1}{2}}&\equiv p\cdot 2kH^{(2k+1)}_{\frac{p-1}{2}} \pmod {p^2},\\
p\frac{2k}{2k+1}B_{p-1-2k}-p\frac{2k(2^{2k+1} -1)}{2k+1}B_{p-1-2k}&\equiv p\cdot 2kH^{(2k+1)}_{\frac{p-1}{2}} \pmod {p^2},\\
\frac{2(1-4^k)}{2k+1}B_{p-1-2k}&\equiv H^{(2k+1)}_{\frac{p-1}{2}} \pmod {p}.
\end{align*}
Finally, for the case $p=2k+1$, \eqref{e9bbs} is obtained by Fermat little theorem, and Eisenstein congruence \eqref{1.3} as  $$H^{(p)}_{\frac{p-1}{2}}\equiv H_{\frac{p-1}{2}}\equiv -2\cdot q_p \equiv \frac{2(1-4^{\frac{p-1}{2}})}{p}B_0\pmod {p}.$$
\end{proof}

\begin{col} For $p \ge 2k+3$, we have
\begin{align}
	\label{e7}    H^{(2k)}_{\frac{p-1}{2}} &\equiv 0 \pmod p, \\
\label{e8} H^{(2k)}_{p-1} &\equiv 0 \pmod p, \\
	\label{e9} H^{(2k-1)}_{p-1}  &\equiv 0 \pmod {p^2}.  \end{align}
	Moreover, when $B_{p-2k-1}\equiv 0 \pmod p$, in other words when $(p, p-2k-1)$ is an irregular pair, we have
	\begin{align}\label{e8b}   H^{(2k)}_{p-1} &\equiv 0 \pmod {p^2}, \\
\label{e8bb}   H^{(2k)}_{\frac{p-1}{2}} &\equiv 0 \pmod {p^2}, \\
\label{e9b}   H^{(2k-1)}_{p-1}  &\equiv 0 \pmod {p^3},  \\
	\label{e9bbb}   H^{(2k+1)}_{\frac{p-1}{2}}  &\equiv 0 \pmod {p}.  \end{align}
\end{col}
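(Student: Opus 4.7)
The corollary is stated as an immediate consequence of Theorem \ref{prop3}, and my plan is simply to extract the six congruences from the four congruences (\ref{e8bbf})--(\ref{e9bbs}) by reducing modulo the appropriate power of $p$. Before doing so, I must verify two integrality facts so that the reductions are meaningful.

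First, I would check that $B_{p-1-2k}$ is $p$-integral. Since $p \ge 2k+3$, we have $0 < p-1-2k < p-1$, so $p-1$ does not divide $p-1-2k$, and by the Von Staudt--Clausen theorem \eqref{e2e} the prime $p$ does not appear in the denominator of $B_{p-1-2k}$. Next, the rational coefficients $\frac{2k}{2k+1}$, $\frac{k(2^{2k+1}-1)}{2k+1}$, and $\frac{k(2k-1)}{2k+1}$ appearing in (\ref{e8bbf})--(\ref{e9bb}), as well as $\frac{2(1-4^k)}{2k+1}$ in (\ref{e9bbs}), are $p$-integral because $p > 2k+1$ and so $p \nmid 2k+1$.

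With these integrality facts, the unconditional parts follow directly. From \eqref{e8bbf} the right-hand side $p\frac{2k}{2k+1}B_{p-1-2k}$ is divisible by $p$, which yields \eqref{e8}; similarly \eqref{e10eeeb1} gives \eqref{e7}, and \eqref{e9bb} gives \eqref{e9} since the right-hand side there carries a factor $p^2$. The conditional (irregular pair) statements \eqref{e8b}, \eqref{e8bb}, \eqref{e9b} are obtained by the same reductions, this time picking up one extra factor of $p$ from the hypothesis $B_{p-1-2k} \equiv 0 \pmod p$, which upgrades the moduli to $p^2$, $p^2$, $p^3$ respectively. Finally, \eqref{e9bbb} follows from \eqref{e9bbs} by directly substituting $B_{p-1-2k} \equiv 0 \pmod p$ into the $p$-integral coefficient $\frac{2(1-4^k)}{2k+1}$.

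There is essentially no obstacle: the entire content of the corollary is a straightforward repackaging of Theorem \ref{prop3}, and the only real verification is the $p$-integrality of $B_{p-1-2k}$ under the hypothesis $p \ge 2k+3$, which I have noted above.
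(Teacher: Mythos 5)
Your proposal is correct and matches the paper's (implicit) argument: the corollary is stated as an immediate consequence of Theorem \ref{prop3}, and your reductions, together with the $p$-integrality checks on $B_{p-1-2k}$ (via Von Staudt--Clausen, since $0<p-1-2k<p-1$) and on the coefficients with denominator $2k+1$, are exactly the verification the paper leaves to the reader.
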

\ \\

\noindent We will now derive congruences modulo arbitrary prime powers for the Harmonic numbers, involving the Bernoulli numbers. The next theorem is essentially already known: it was originally obtained from quite advanced mathematics, involving $p$-adic L-functions  (\cite{Washington98}, Theorem 1). Our derivation will be more elementary.

\begin{thm}\label{3.1}  Let $n$, $i$ be natural integers, $p$ a prime. The congruence
\begin{equation}\label{ee10bis}\sum_{j=0}^{2n+1}\binom{j+2i}{2i}B_j H^{(j+2i+1)}_{p-1}(-p)^j\equiv 0 \pmod {p^{2n+m}} \end{equation} 
holds with the following values of $m$ under the corresponding restrictions on $p$: 
\begin{align*} m&=1 \text{ when } p\ge 2, \\
m&=2 \text{ when } p\ge 3, \\m&=3 \text{ when } p\ge 5 \text{ or }  p=3  \text{ and }n\equiv 0 \pmod 3, \\m&=4 \text{ when } p\ge 2n+2i+7, \\m&=5 \text{ when }  (p, p-2n-2i-5) \text{ is an irregular pair.} \end{align*}
\end{thm}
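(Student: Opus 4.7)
My plan is to prove \eqref{ee10bis} by combining Faulhaber's formula \eqref{e2d} with Euler's theorem to express $H^{(2i+1)}_{p-1}$ as a Bernoulli-weighted expansion in $p$, and then converting the large-index Bernoulli numbers back into harmonic numbers by iterating the argument that underlies Lemma \ref{2.5}. The target is a Bernoulli-weighted refinement of \eqref{e10ee} for the odd case $k=2i+1$, and the coefficients $\binom{2i+j}{2i}B_j(-1)^j$ should emerge naturally from the Faulhaber expansion.

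First I fix $N \gg 2n+m$ and put $M = p^{N}(p-1) - (2i+1)$ (which is odd). Euler's theorem lifted to $p^{N+1}$ gives $H^{(2i+1)}_{p-1} \equiv \sum_{a=1}^{p-1}a^M \pmod{p^{N+1}}$. Applying \eqref{e2d} at $n=p$, subtracting the $a=p$ contribution, and using $M$ odd together with $B_{2r+1}=0$ for $r\ge 1$ (from \eqref{e2bbis}), the Faulhaber sum collapses to
\begin{equation*}
H^{(2i+1)}_{p-1} \equiv -\tfrac{1}{2}p^M + \sum_{k\ge 1}\frac{p^{2k}}{2k}\binom{M}{2k-1} B_{M-2k+1} \pmod{p^{N+1}}.
\end{equation*}
Lemma \ref{binom} reduces $\binom{M}{2k-1}$ to $-\binom{2i+2k-1}{2i}$ modulo a high power of $p$. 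Each $pB_{M-2k+1}$ is then expanded via the recursion obtained in the proof of Lemma \ref{2.5},
\begin{equation*}
pB_{M-2k+1} \equiv H^{(2i+2k)}_{p-1} + 2(i+k)\sum_{g\ge1}\frac{pB_{M-2k+1-2g}\binom{M-2k}{2g-1}}{2g(2g+1)}p^{2g} \pmod{p^{N+1}},
\end{equation*}
which I iterate a bounded number of times (each iteration contributes at least $p^2$ by Lemma \ref{lemjohn}) with nested binomials again cleaned up by Lemma \ref{binom}. After interchanging summations, this presents $H^{(2i+1)}_{p-1}$ as a $\mathbb{Z}_p$-linear combination of $H^{(2i+1+j)}_{p-1}p^j$ for $0\le j\le 2n+1$.

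The central combinatorial task is then to check that the coefficient of $H^{(2i+1+j)}_{p-1}p^j$ in this expansion is exactly $\binom{2i+j}{2i}B_j(-1)^j$. Collecting contributions from all recursion depths yields a finite sum of products of binomials together with rational factors $\frac{1}{2g(2g+1)}$; via manipulations analogous to those of Lemma \ref{lem1} and the Bernoulli recurrence \eqref{e2b}, this should telescope to the claimed Bernoulli coefficient, with the cleanest organization being through the exponential generating function $x/(e^x-1)$ of \eqref{e2bb}. The five admissible values of $m$ come from tightening valuation bookkeeping at the end: the baseline $m=3$ for $p\ge 5$ uses the divisibilities \eqref{e8} and \eqref{e9}; the weaker cases $m=1,2$ and the exceptional $p=3,n\equiv 0\pmod 3$ are handled by direct inspection of the first few summands; $m=4$ requires $p\ge 2n+2i+7$ so that every Bernoulli number $B_{p-1-2k}$ arising in the analysis is $p$-integral and in the regular range; and $m=5$ exploits \eqref{e8b}--\eqref{e9bbb} to gain one extra factor of $p$ whenever $(p,p-2n-2i-5)$ is irregular. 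The main obstacle will be verifying this combinatorial collapse---that the iterated expansion yields exactly the Bernoulli-weighted shape of \eqref{ee10bis} rather than merely an equivalent-but-differently-presented expansion---which will require delicate bookkeeping of the double sum that emerges from the nested application of Lemma \ref{2.5}.
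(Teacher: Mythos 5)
Your proposal has a genuine gap at its declared ``central combinatorial task,'' and in fact the identification you hope for cannot hold term by term. Track the parities: with $M=p^N(p-1)-(2i+1)$ odd, the Faulhaber expansion you write down, followed by the Lemma \ref{2.5}-type replacement $pB_{M+1-2(k+g)}\rightsquigarrow H^{(2i+2k+2g)}_{p-1}$ at every depth of the iteration, produces only terms of the form $p^{2(k+g)-1}H^{(2i+2(k+g))}_{p-1}$, i.e.\ contributions $p^jH^{(j+2i+1)}_{p-1}$ with $j$ \emph{odd} exclusively. The target \eqref{ee10bis}, by contrast, is supported on $j=0$, $j=1$ and \emph{even} $j\ge 2$ (since $B_j=0$ for odd $j\ge3$), with the harmonic numbers $H^{(2k+2i+1)}_{p-1}$ of odd order at $j=2k$. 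The two expansions agree only at $j\le 1$ --- this is exactly the phenomenon the paper flags in the remark that \eqref{zzzza} coincides with \eqref{e10eex} only up to $j\le 1$ --- so no amount of telescoping via \eqref{e2b} or \eqref{e2bb} will make your coefficients ``collapse to $\binom{2i+j}{2i}B_j(-1)^j$.'' What you would actually obtain is a different (Glaisher-type) expansion of $H^{(2i+1)}_{p-1}$ in terms of even-order harmonic numbers, and proving that it is congruent to the left side of \eqref{ee10bis} modulo $p^{2n+m}$ is essentially the whole theorem, which your outline defers (``should telescope,'' ``the main obstacle will be verifying this''). The case analysis for $m=1,\dots,5$ is likewise only gestured at.

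For comparison, the paper's proof avoids Faulhaber entirely and is self-referential: writing $S$ for the left side of \eqref{ee10bis}, it substitutes the exact $p$-adic expansion \eqref{e10ee} for each $H^{(j+2i+1)}_{p-1}$, re-indexes by $k=h+j$, and uses the identity $\binom{j+2i}{2i}\binom{k+2i}{j+2i}=\binom{k+2i}{2i}\binom{k}{j}$ together with the Bernoulli recurrence \eqref{e2b} to show that the block $k\le 2n+1$ reproduces $-S$. Hence $2S$ equals minus the tail $\sum_{k\ge 2n+2}p^kH^{(k+2i+1)}_{p-1}\binom{k+2i}{2i}\sum_{j=0}^{2n+1}B_j\binom{k}{j}$, whose $k=2n+2$ term vanishes by \eqref{e2b}; the five values of $m$ then come from estimating the $p$-adic valuation of the $k=2n+3$ term via Von Staudt--Clausen, \eqref{e8}, \eqref{e8b} and the irregular-pair hypothesis. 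If you want to salvage your route, you would need to prove the congruence between your odd-$j$ expansion and the Bernoulli-weighted one as a separate identity; the paper's self-cancellation argument is the shorter path.
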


\noindent \text{\bf{Remark.}} Congruence \eqref{1.2} is a particular case, $n=i=0$,  $m=4$,  of \eqref{ee10bis}. Similar congruences are 
	\begin{align*}  H_{p-1} + \frac{p}{2}H^{(2)}_{p-1} + \frac{p^2}{6}H^{(3)}_{p-1} & \equiv 0 \pmod{p^6} \text{ \ \ \ when } p\ge 9,\\
	  H_{p-1} + \frac{p}{2}H^{(2)}_{p-1} + \frac{p^2}{6}H^{(3)}_{p-1} - \frac{p^4}{30}H^{(5)}_{p-1} &\equiv 0 \pmod{p^8} \text{ \ \ \ when } p\ge 11,\\
 &\text{etc.}\end{align*}
\noindent \text{\bf{Remark.}} Also in particular when $n=0$, we have
\begin{equation}\label{zzzz} H^{(2i+1)}_{p-1}+p\frac{2i+1}{2}H^{(2i+2)}_{p-1}\equiv 0 \pmod {p^{m}}, \end{equation}
with: 
\begin{align*} m&=1 \text{ when } p\ge 2, \\
m&=2 \text{ when } p\ge 3, \\m&=3 \text{ when } p\ge 5 \text{ or }  p=3  \text{ and }n\equiv 0 \pmod 3, \\m&=4 \text{ when } p\ge 2i+7, \\m&=5 \text{ when }  (p, p-2i-5) \text{ is an irregular pair.} \end{align*}

\noindent \text{\bf{Example.}} When $m=5$, since $(37,32)$ is an irregular pair, we have $i=0$, and
	$$\sum_{j=1}^{36} \frac{1}{j} +\frac{37}{2} \sum_{j=1}^{36} \frac{1}{j^2} =\frac{1422091936194747472864459922257}{41704772176589465865841920000}=\frac{N}{D}$$ 
	and $N= 37^5\cdot1123\cdot9133\cdot1999520400972139$ is divisible by $37^5$, as expected.\\

\begin{col}\label{Remark4}
 For any odd prime  $p$, we also have the following weaker $p$-adically converging series involving generalized Harmonic numbers:
\begin{equation}\label{ee10biss}\sum_{j=0 }^{k-1}\binom{j+2i}{2i}B_j H^{(j+2i+1)}_{p-1}(-p)^j \equiv 0 \pmod {p^k}.\end{equation}
  In particular, when $i=0$, we have
	\begin{equation}\label{zzzza}\sum_{0\le j}B_j H^{(j+1)}_{p-1}(-p)^j=0.  \end{equation}
	\end{col}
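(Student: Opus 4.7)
The plan is to derive Corollary \ref{Remark4} directly from Theorem \ref{3.1}, splitting on the parity of $k$ so that only the weakest versions of the theorem ($m=1$ and $m=2$, both available for every odd prime) are needed.

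For $k = 2n+2$ even, I would invoke Theorem \ref{3.1} with this $n$ and $m = 2$: the hypothesis $p \geq 3$ is satisfied for any odd prime, and the conclusion gives the sum over $j = 0, \ldots, 2n+1 = k-1$ modulo $p^{2n+2} = p^k$, which is exactly \eqref{ee10biss}.

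For $k = 2n+1$ odd, I would invoke Theorem \ref{3.1} with the same $n$ but with $m = 1$, which is valid for every prime. This yields a congruence modulo $p^{2n+1} = p^k$ whose sum runs up to $j = 2n+1$; the only difference from \eqref{ee10biss} is the top term
\begin{equation*}
\binom{2n+1+2i}{2i}\, B_{2n+1}\, H^{(2n+2+2i)}_{p-1}\, (-p)^{2n+1},
\end{equation*}
which I claim is $\equiv 0 \pmod{p^{2n+1}}$ in both subcases. For $n \geq 1$ it vanishes outright by \eqref{e2bbis}; for $n = 0$ (so $k=1$) it equals $(2i+1)\,p\,H^{(2i+2)}_{p-1}$ (using $B_1 = -1$), which is divisible by $p$ because $H^{(2i+2)}_{p-1}$ is a $p$-adic integer. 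Dropping this term reduces the upper limit from $2n+1$ to $k - 1$, giving \eqref{ee10biss}.

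For the closed form \eqref{zzzza}, I would specialize \eqref{ee10biss} to $i = 0$ and pass to the $p$-adic limit $k \to \infty$. Subtracting the congruences for consecutive values of $k$ shows that $v_p\bigl(B_j\,H^{(j+1)}_{p-1}(-p)^j\bigr) \to \infty$, so the series converges in $\mathbb{Z}_p$, and its sum must equal the $p$-adic limit of the partial sums, which is $0$. The main obstacle is really only careful bookkeeping; the one genuinely useful observation is that the $j = 2n+1$ term of Theorem \ref{3.1} is always absorbed modulo $p^{2n+1}$, which is precisely what allows trading an upper limit of $2n+1$ for $2n = k-1$.
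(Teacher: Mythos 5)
Your proof is correct and follows essentially the same route as the paper: both derive \eqref{ee10biss} from Theorem \ref{3.1} by splitting on the parity of $k$ and exploiting the vanishing of the odd-indexed Bernoulli numbers, then obtain \eqref{zzzza} by letting $k\to\infty$ $p$-adically. The only cosmetic difference is that for odd $k$ the paper pads the sum up to the next odd index (adding a zero term) and applies the theorem with $m=2$, whereas you apply it with $m=1$ and discard the top term (noting for $k=1$ that the $j=1$ term is divisible by $p$ since $H^{(2i+2)}_{p-1}$ is $p$-integral); both hinge on the same observation.
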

\noindent \text{\bf{Remark.}} Note that \eqref{zzzza} coincide with \eqref{e10eex} only up to $j \le 1$.\\
\ 
\begin{proof}[Proof of Theorem \ref{3.1}]
We first show how \eqref{ee10biss} is obtained from \eqref{ee10bis}. Suppose $p\ge3$. In the case $k=1$, \eqref{ee10biss} reduces to \eqref{e10eeez}. Suppose now $k>1$. If $k$ is even, then $k=2n+2$ ($n \ge 0$) and by  \eqref{ee10bis}, we have
\begin{align*}\sum_{j=0 }^{k-1}\binom{j+2i}{2i}B_j H^{(j+2i+1)}_{p-1}(-p)^j &=\sum_{j=0 }^{2n+1}\binom{j+2i}{2i}B_j H^{(j+2i+1)}_{p-1}(-p)^j \\
&\equiv 0 \pmod {p^{2n+2} = p^k}.
\end{align*}
 If $k$ is odd, then $k=2n+3$ ($n \ge 0$) and  $B_{2n+3}=0$, then
\begin{align*}\sum_{j=0 }^{k-1}\binom{j+2i}{2i}B_j H^{(j+2i+1)}_{p-1}(-p)^j &=\sum_{j=0 }^{2n+2}\binom{j+2i}{2i}B_j H^{(j+2i+1)}_{p-1}(-p)^j  \\
&=\sum_{j=0 }^{2n+3}\binom{j+2i}{2i}B_j H^{(j+2i+1)}_{p-1}(-p)^j \\
& \equiv 0 \pmod {p^{2n+4}}  \text { (by  \eqref{ee10bis}) }\\
& \equiv 0 \pmod {p^{2n+3}= p^k}.
\end{align*}

\noindent Now, for the proof of \eqref{ee10bis}, let $S$ be the left hand side in \eqref{ee10bis}. From \eqref{e10ee} we have
\begin{align*}H^{(j+2i+1)}_{p-1}&=(-1)^{j+2i+1} \sum_{h\ge0} \binom {h+j+2i}{h}H^{(j+2i+h+1)}_{p-1}p^h\end{align*}  
then
\begin{align*}S&=-\sum_{j=0 }^{2n+1}\binom{j+2i}{2i}B_j \sum_{h\ge0} \binom {h+j+2i}{h}H^{(j+2i+h+1)}_{p-1}p^{h+j}. \end{align*}
Let $k=h+j$,  $j\le 2n+1$ and $j\le k$, hence $j\le \min(2n+1,k)$ and we can derive the following series expansion of $S$ in terms of $p^k$:
\begin{align*}
S&=-\sum_{k\ge0}p^{k}H^{(k+2i+1)}_{p-1}\sum_{j=0 }^{\min(2n+1,k)}B_j\binom{j+2i}{2i}\binom {k+2i}{j+2i}   \\
&=-\sum_{k\ge0}p^{k}H^{(k+2i+1)}_{p-1}\binom {k+2i}{2i}\sum_{j=0 }^{\min(2n+1,k)}B_j\binom {k}{j}  \\
&=-\sum_{k=0}^{2n+1}p^{k}H^{(k+2i+1)}_{p-1}\binom {k+2i}{2i}\sum_{j=0}^kB_j\binom {k}{j}\\
&-\sum_{k\ge2n+2}p^{k}H^{(k+2i+1)}_{p-1}\binom {k+2i}{2i}\sum_{j=0}^{2n+1}B_j\binom {k}{j}\\
&=-\sum_{k=0}^{2n+1}p^{k}H^{(k+2i+1)}_{p-1}\binom {k+2i}{2i}(-1)^kB_k\\
&-\sum_{k\ge2n+2}p^{k}H^{(k+2i+1)}_{p-1}\binom {k+2i}{2i}\sum_{j=0}^{2n+1}B_j\binom {k}{j} \\
&=-S -\sum_{k\ge2n+2}p^{k}H^{(k+2i+1)}_{p-1}\binom {k+2i}{2i}\sum_{j=0}^{2n+1}B_j\binom {k}{j}.
\end{align*}
Hence
\begin{align*}2S&=-\sum_{k\ge2n+2}p^{k}H^{(k+2i+1)}_{p-1}\binom {k+2i}{2i}\sum_{j=0}^{2n+1}B_j\binom {k}{j}.\end{align*}
Note that, in the above derivation, it is made use of o\eqref{e2b} and also of the identity $\binom{j+2i}{2i}\binom {k+2i}{j+2i}=\binom {k+2i}{2i}\binom {k}{j}$. Moreover, from \eqref{e2b}, we also have $\sum_{j=0}^{2n+1}B_j\binom{2n+2}{j}=0$.
Hence
\begin{equation}\label{e10cxbis} 2S=-\sum_{k\ge2n+3}p^{k}H^{(k+2i+1)}_{p-1}\binom {k+2i}{2i}\sum_{j=0}^{2n+1}B_j\binom {k}{j}.\end{equation}

\noindent By the Von Staudt-Clausen theorem \eqref{e2e}, we know that $p$ may divide the denominator of any $B_j$ only once at most, then we must have $$2S \equiv 0 \pmod{p^{2n+2}} $$ and this makes a proof of the above claim for the cases $m=1$ and $m=2$. Also, again from \eqref{e2b}, $\sum_{j=0}^{2n+1}B_j\binom{2n+3}{j}=-(2n+3)B_{2n+2}$, then \eqref{e10cxbis} becomes 
\begin{equation}\label{e10cybis} \begin{split}2S&=p^{2n+3}H^{(2n+2i+4)}_{p-1}\binom {2n+2i+3}{2i}(2n+3)B_{2n+2} \\
&-\sum_{k\ge 2n+4}p^{k}H^{(k+2i+1)}_{p-1}\binom {k+2i}{2i}\sum_{j=0}^{2n+1}B_j\binom {k}{j}.\end{split}\end{equation}

\noindent The second term on the right hand side of \eqref{e10cybis} is unconditionnally zero modulo $p^{2n+3}$, by Von Staudt-Clausen. The first term on the right hand side of \eqref{e10cybis} is also zero modulo $p^{2n+3}$ when $2n+2$ is not a multiple of $p-1$, also by Von Staudt-Clausen. Then the congruence modulo $p^{2n+3}$ might fail only when $p=3$ or when $n+1$ is a multiple of $\frac{p-1}{2}$. But in the latter case, we have
\begin{align*} H^{(2n+4)}_{p-1}=\sum_{i=1}^{p-1}\frac{1}{i^2 i^{2(n+1)}} \equiv \sum_{i=1}^{p-1}\frac{1}{i^2} &=H^{(2)}_{p-1}\\
&\equiv 0\pmod p \text{ \ when $p\ge 5$, \  by \eqref{e8}} \end{align*}  
and this makes a compensation. Note that in the case where $p=3$ there is also a compensation when $3$ divides $2n+3$, that is when $n$ is a multiple of $3$. This remark completes the proof in the case $m=3$.
\\Now, if $p\ge 2n+2i+7$, from Theorem \ref{prop3} \eqref{e8}, $H^{(2n+2i+4)}_{p-1}\equiv 0 \pmod p$ and, again from the Von Staudt-Clausen theorem, $p$ is large enough so that it never divides the denominators of $B_j$ when $0\le j\le 2n+1$ then when $p\ge 2n+2i+7$, we have $ 2S \equiv 0 \pmod {p^{2n+4}}$, this completes the proof for the case $m=4$.\\
Finally, if  $B_{p-2n-2i-5}\equiv 0 \pmod p$, then $H^{(2n+2i+4)}_{p-1}\equiv 0 \pmod {p^2}$, from Theorem \ref{prop3}, and since as soon as $p\ge3$,  $H^{(2n+2i+5)}_{p-1}\equiv 0 \pmod {p}$, by Theorem \ref{prop4} and this completes the proof for the case $m=5$.
\end {proof}
\noindent Now, we give a $p$-adic expansion for $H^{(2i)}_\frac{p-1}{2}$ which seems to be new. It is in the same spirit as the previous one, though.
\begin{thm}\label{H2kp-1/2}Let $p$ be an odd prime, $n, i$ positive integers and $j$ a non-negative integer. Let $C_j:=2B_{j+2}+(-1)^jB_{j+1}+\frac{1}{2}$. The congruence
\begin{equation}\label{eecj}\sum_{j=0}^{2n-1}\binom{j+2i-1}{j+1}\frac{2^{j+2i}-1}{2^j} C_jH^{(j+2i)}_\frac{p-1}{2}p^j\equiv 0 \pmod {p^{2n+m}} \end{equation} 
holds with the following value for $m$ and the corresponding restrictions on $p$: we have $m=0$ when $p\ge 3$, $m=1$ when $p>2n+1$, and  $m=2$ when
$$\binom{2n+2i}{2n+2}\left(2^{2n+2i+1}-1\right) H^{(2n+2i+1)}_\frac{p-1}{2}\left( (2n+3)B_{2n+2}+\frac{n}{2}\right)\equiv 0 \pmod p. $$

\begin{proof}
We start from a reformulation of \eqref{e10eeeff}, with $j \ge 1$, $g \ge 0$. We have
\begin{equation} \label {abc}\begin{split}
H^{(2j+2g)}_{\frac{p-1}{2}}=&-\sum_{h\ge0} \binom {2h+2j+2g-1}{2h}\frac{2^{2h+2g+2j}-1}{ 2^{2j+2g}-1}H^{(2h+2g+2j)}_{\frac{p-1}{2}}\left(\frac{p}{2}\right)^{2h}\\
&-\sum_{h\ge0} \binom {2h+2j+2g}{2h+1}\frac{2^{2h+2g+2j+1}-1}{2^{2j+2g}-1}H^{(2h+2g+2j+1)}_{\frac{p-1}{2}}\left(\frac{p}{2}\right)^{2h+1}.
\end{split}\end{equation}

\noindent Let $$ S=\sum_{j=1}^{n} \binom {2j+2g-1}{2g}B_{2j}(2^{2g+2j}-1)H^{(2g+2j)}_{\frac{p-1}{2}}\left(\frac{p}{2}\right)^{2j}.$$ 

\noindent Substituting  $H^{(2j+2g)}_{\frac{p-1}{2}}$ from \eqref{abc} in the above expression for $S$, we obtain\\
\noindent $S=-\sum_{j=1}^{n} \binom {2j+2g-1}{2g}B_{2j}\sum_{h\ge0}\binom {2h+2j+2g-1}{2h}(2^{2h+2g+2j}-1)H^{(2h+2g+2j)}_{\frac{p-1}{2}}\left(\frac{p}{2}\right)^{2j+2h}\\
-\sum_{j=1}^{n} \binom {2j+2g-1}{2g}B_{2j}\sum_{h\ge0} \binom {2h+2j+2g}{2h+1}(2^{2h+2g+2j+1}-1)H^{(2h+2g+2j+1)}_{\frac{p-1}{2}}\left(\frac{p}{2}\right)^{2j+2h+1}$.\\

\noindent Now, we let $2k=2h+2j$ and we will replace the summation index $h$ by $k$. The new index $k$ runs from $1$, with no upper bound and the index $j$ runs from $1$ to $\min(n,k)$, since $1\le j\le n$ and $2j=2k-2h \le 2k$. Then, with the new summation indices and after inverting the sums, we have
\begin{align*}&S=-\sum_{1\le k}(2^{2g+2k}-1)H^{(2g+2k)}_{\frac{p-1}{2}} \left(\frac{p}{2}\right)^{2k}\sum_{j=1}^{\min(n,k)}\binom {2j+2g-1}{2g}\binom {2g+2k-1}{2j+2g-1}B_{2j} \\
&-\sum_{1\le k}(2^{2g+2k+1}-1)H^{(2g+2k+1)}_{\frac{p-1}{2}} \left(\frac{p}{2}\right)^{2k+1}\sum_{j=1}^{\min(n,k)}\binom {2j+2g-1}{2g}\binom {2g+2k}{2j+2g-1}B_{2j}.
\end{align*} 
\noindent But 
\begin{align*}\binom {2j+2g-1}{2g}\binom {2g+2k-1}{2j+2g-1}&=\binom {2k+2g-1}{2g}\binom {2k-1}{2j-1}\end{align*} and 
\begin{align*}\binom {2j+2g-1}{2g}\binom {2g+2k}{2j+2g-1}&=\binom {2k+2g}{2g}\binom {2k}{2j-1}. \end{align*} 
Then
\begin{align*}S=&-\sum_{1\le k}\binom {2k+2g-1}{2g}(2^{2g+2k}-1)H^{(2g+2k)}_{\frac{p-1}{2}} \left(\frac{p}{2}\right)^{2k}\sum_{j=1}^{\min(n,k)}\binom {2k-1}{2j-1}B_{2j} \\
&-\sum_{1\le k}\binom {2k+2g}{2g}(2^{2g+2k+1}-1)H^{(2g+2k+1)}_{\frac{p-1}{2}} \left(\frac{p}{2}\right)^{2k+1}\sum_{j=1}^{\min(n,k)}\binom {2k}{2j-1}B_{2j}.\end{align*}
\begin{align*} S=&-\sum_{k=1}^{ n}\binom {2k+2g-1}{2g}(2^{2g+2k}-1)H^{(2g+2k)}_{\frac{p-1}{2}} \left(\frac{p}{2}\right)^{2k}\sum_{j=1}^{ k}\binom {2k-1}{2j-1}B_{2j} \\
&-\sum_{k=1}^{ n}\binom {2k+2g}{2g}(2^{2g+2k+1}-1)H^{(2g+2k+1)}_{\frac{p-1}{2}} \left(\frac{p}{2}\right)^{2k+1}\sum_{j=1}^{ k}\binom {2k}{2j-1}B_{2j} \\
&-\binom {2n+2g+1}{2g}(2^{2g+2n+2}-1)H^{(2g+2n+2)}_{\frac{p-1}{2}} \left(\frac{p}{2}\right)^{2n+2}\sum_{j=1}^{ n}\binom {2n+1}{2j-1}B_{2j} \\
&-\binom {2n+2g+2}{2g}(2^{2g+2n+3}-1)H^{(2g+2n+3)}_{\frac{p-1}{2}} \left(\frac{p}{2}\right)^{2n+3}\sum_{j=1}^{ n}\binom {2n+2}{2j-1}B_{2j} \\
&-\sum_{n+2\le k }\binom {2k+2g-1}{2g}(2^{2g+2k}-1)H^{(2g+2k)}_{\frac{p-1}{2}} \left(\frac{p}{2}\right)^{2k}\sum_{j=1}^{ n}\binom {2k-1}{2j-1}B_{2j} \\
&-\sum_{n+2\le k}\binom {2k+2g}{2g}(2^{2g+2k+1}-1)H^{(2g+2k+1)}_{\frac{p-1}{2}} \left(\frac{p}{2}\right)^{2k+1}\sum_{j=1}^{ n}\binom {2k}{2j-1}B_{2j}.
\end{align*}

\noindent We now make use of Lemma \ref{lem1}, so that
\begin{align*}S=&-\sum_{k=1}^{ n}\binom {2k+2g-1}{2g}(2^{2g+2k}-1)H^{(2g+2k)}_{\frac{p-1}{2}} \left(\frac{p}{2}\right)^{2k}\left(B_{2k}+B_{2k-1}+\frac{1}{2}\right) \\
&-\sum_{k=1}^{ n}\binom {2k+2g}{2g}(2^{2g+2k+1}-1)H^{(2g+2k+1)}_{\frac{p-1}{2}} \left(\frac{p}{2}\right)^{2k+1}\left(\frac{1}{2}-B_{2k}\right) \\
&-\binom {2n+2g+1}{2g}(2^{2g+2n+2}-1)H^{(2g+2n+2)}_{\frac{p-1}{2}} \left(\frac{p}{2}\right)^{2n+2}\frac{1}{2} \\
&-\binom {2n+2g+2}{2g}(2^{2g+2n+3}-1)H^{(2g+2n+3)}_{\frac{p-1}{2}} \left(\frac{p}{2}\right)^{2n+3}\left(\frac{1}{2}-(2n+3)B_{2n+2}\right)\\
&-\sum_{n+2\le k }\binom {2k+2g-1}{2g}(2^{2g+2k}-1)H^{(2g+2k)}_{\frac{p-1}{2}} \left(\frac{p}{2}\right)^{2k}\sum_{j=1}^{ n}\binom {2k-1}{2j-1}B_{2j} \\
&-\sum_{n+2\le k}\binom {2k+2g}{2g}(2^{2g+2k+1}-1)H^{(2g+2k+1)}_{\frac{p-1}{2}} \left(\frac{p}{2}\right)^{2k+1}\sum_{j=1}^{ n}\binom {2k}{2j-1}B_{2j}.
\end{align*}

\noindent By the Von Staudt Clausen theorem, $p$ may divide the inner sum in the two last lines only once at most, and then the last line in the above expression for $S$ is $0 \bmod p^{2n+4}$, and, since by \eqref{e10eeee} $(2^{2g+2k}-1)H^{(2g+2k)}_{\frac{p-1}{2}} \equiv 0 \bmod p$, the fifth line is also $0 \bmod p^{2n+4}$. Then, we obtain\\
\begin{align*}S\equiv &-\sum_{k=1}^{ n}\binom {2k+2g-1}{2g}(2^{2g+2k}-1)H^{(2g+2k)}_{\frac{p-1}{2}} \left(\frac{p}{2}\right)^{2k}\left(B_{2k}+B_{2k-1}+\frac{1}{2}\right) \\
&-\sum_{k=1}^{ n}\binom {2k+2g}{2g}(2^{2g+2k+1}-1)H^{(2g+2k+1)}_{\frac{p-1}{2}} \left(\frac{p}{2}\right)^{2k+1}\left(\frac{1}{2}-B_{2k}\right) \\
&-\binom {2n+2g+1}{2g}(2^{2g+2n+2}-1)H^{(2g+2n+2)}_{\frac{p-1}{2}} \left(\frac{p}{2}\right)^{2n+2}\frac{1}{2} \\
&-\binom {2n+2g+2}{2g}(2^{2g+2n+3}-1)H^{(2g+2n+3)}_{\frac{p-1}{2}} \left(\frac{p}{2}\right)^{2n+3}\left(\frac{1}{2}-(2n+3)B_{2n+2}\right) \\
&\pmod{p^{2n+4}}.
\end{align*}
Now by \eqref{e10eeee}, we have
$$(2^{2g+2n+2}-1)H^{(2g+2n+2)}_{\frac{p-1}{2}} \equiv -p\frac{n+g+1}{2}(2^{2g+2n+3}-1)H^{(2g+2n+3)}_{\frac{p-1}{2}} \bmod p^2,$$
and after some calculations the third and forth lines in the above expression for $S$ can be merged so that:
\begin{align*}S\equiv &-\sum_{k=1}^{ n}\binom {2k+2g-1}{2g}(2^{2g+2k}-1)H^{(2g+2k)}_{\frac{p-1}{2}} \left(\frac{p}{2}\right)^{2k}\left(B_{2k}+B_{2k-1}+\frac{1}{2}\right) \\
&-\sum_{k=1}^{ n}\binom {2k+2g}{2g}(2^{2g+2k+1}-1)H^{(2g+2k+1)}_{\frac{p-1}{2}} \left(\frac{p}{2}\right)^{2k+1}\left(\frac{1}{2}-B_{2k}\right) \\
&+\binom {2n+2g+2}{2g}(2^{2g+2n+3}-1)H^{(2g+2n+3)}_{\frac{p-1}{2}} \left(\frac{p}{2}\right)^{2n+3}\left(\frac{n}{2}+(2n+3)B_{2n+2}\right)\\
&\pmod{p^{2n+4}}.
\end{align*}

\noindent Recall that $ S= \sum_{k=1}^{ n}\binom {2k+2g-1}{2g}(2^{2g+2k}-1)H^{(2g+2k)}_{\frac{p-1}{2}} \left(\frac{p}{2}\right)^{2k}B_{2k}$, 
then after rearranging and dividing throughout by $p^2$, we obtain the following congruence$\pmod  {p^{2n+2}}$:

\begin{align*}0\equiv &-\sum_{k=1}^{ n}\binom {2k+2g-1}{2g}(2^{2g+2k}-1)H^{(2g+2k)}_{\frac{p-1}{2}} \left(\frac{p}{2}\right)^{2k-2}\left(2B_{2k}+B_{2k-1}+\frac{1}{2}\right) \\
&-\sum_{k=1}^{ n}\binom {2k+2g}{2g}(2^{2g+2k+1}-1)H^{(2g+2k+1)}_{\frac{p-1}{2}} \left(\frac{p}{2}\right)^{2k-1}\left(\frac{1}{2}-B_{2k}\right) \\
&+\binom {2n+2g+2}{2g}(2^{2g+2n+3}-1)H^{(2g+2n+3)}_{\frac{p-1}{2}} \left(\frac{p}{2}\right)^{2n+1}\left(\frac{n}{2}+(2n+3)B_{2n+2}\right)\\
&\pmod{p^{2n+2}} .\end{align*}
That is
\begin{align*}0\equiv &-\sum_{\underset{k \text{even}}{k=0}}^{2n-2}\binom {k+2g+1}{2g}(2^{2g+k+2}-1)H^{(2g+k+2)}_{\frac{p-1}{2}} \left(\frac{p}{2}\right)^{k}\left(2B_{k+2}+B_{k+1}+\frac{1}{2}\right) \\
&-\sum_{\underset{k \text{odd}}{k=1}}^{2n-1}\binom {k+2g+1}{2g}(2^{2g+k+2}-1)H^{(2g+k+2)}_{\frac{p-1}{2}} \left(\frac{p}{2}\right)^{k}\left(\frac{1}{2}-B_{k+1}\right) \\
&+\binom {2n+2g+2}{2g}(2^{2g+2n+3}-1)H^{(2g+2n+3)}_{\frac{p-1}{2}} \left(\frac{p}{2}\right)^{2n+1}\left(\frac{n}{2}+(2n+3)B_{2n+2}\right)\\
&\pmod{p^{2n+2}}.
\end{align*}

\noindent But again, since from $B_3$, the odd-indexed Bernoulli number are $0$, we have\\ 
\begin{align*}0\equiv &-\sum_{k=0}^{ 2n-2}\binom {k+2g+1}{2g}(2^{2g+k+2}-1)H^{(2g+k+2)}_{\frac{p-1}{2}} \left(\frac{p}{2}\right)^{k}\left(2B_{k+2}+\frac{1}{2}\right) \\
&-\sum_{\underset{k \text{even}}{k=0}}^{2n-2}\binom {k+2g+1}{2g}(2^{2g+k+2}-1)H^{(2g+k+2)}_{\frac{p-1}{2}} \left(-\frac{p}{2}\right)^{k}B_{k+1} \\
&-\sum_{\underset{k \text{odd}}{k=1}}^{2n-1}\binom {k+2g+1}{2g}(2^{2g+k+2}-1)H^{(2g+k+2)}_{\frac{p-1}{2}} \left(\frac{p}{2}\right)^{k}(-1)^kB_{k+1} \\
&+\binom {2n+2g+2}{2g}(2^{2g+2n+3}-1)H^{(2g+2n+3)}_{\frac{p-1}{2}} \left(\frac{p}{2}\right)^{2n+1}\left(\frac{n}{2}+(2n+3)B_{2n+2}\right)\\
\equiv &\sum_{k=0}^{ 2n-2}\binom {k+2g+1}{2g}(2^{2g+k+2}-1)H^{(2g+k+2)}_{\frac{p-1}{2}} \left(\frac{p}{2}\right)^{k}\left(2B_{k+2}+(-1)^k B_{k+1}+\frac{1}{2}\right) \\
&-\binom {2n+2g+2}{2g}(2^{2g+2n+3}-1)H^{(2g+2n+3)}_{\frac{p-1}{2}} \left(\frac{p}{2}\right)^{2n+1}\left(\frac{n}{2}+(2n+3)B_{2n+2}\right) \\
&\pmod{p^{2n+2}}.
\end{align*}

\noindent Introducing $C_k$, and shifting $g$, so that $g\ge 1$, we obtain 
\begin{align*} 
 0\equiv &\sum_{k=0}^{2n-1}\binom {k+2g-1}{k+1}(2^{2g+k}-1)C_kH^{(2g+k)}_{\frac{p-1}{2}} \left(\frac{p}{2}\right)^{k}\\
&-\binom {2n+2g}{2n+2}(2^{2g+2n+1}-1)H^{(2g+2n+1)}_{\frac{p-1}{2}} \left(\frac{p}{2}\right)^{2n+1}\left(\frac{n}{2}+(2n+3)B_{2n+2}\right)\\
& \pmod{p^{2n+2}}.
\end{align*}

\noindent The latter congruence obviously  proves the theorem in the case $m=2$, but also in the case $m=1$ since when $p > 2n+1$, $ \frac{n}{2}+(2n+3)B_{2n+2}$ is $p$-integral: this is true for $p>2n+3$ by Von Staudt-Clausen, and also for $p=2n+3$ as $pB_{p-1}$ is also p-integral; and in the case $m=0$, under the only condition that $p$ be odd.
\end{proof}
\end {thm}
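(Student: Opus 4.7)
The plan is to imitate the proof of Theorem \ref{3.1}, but with the \emph{inverted} expansion \eqref{e10eeeff} of $H^{(2k)}_{\frac{p-1}{2}}$ playing the role that \eqref{e10ee} played there, and with the four identities of Lemma \ref{lem1} replacing the simple use of \eqref{e2b}. Concretely, for an auxiliary parameter $g\ge 0$ I would introduce
$$S=\sum_{j=1}^{n}\binom{2j+2g-1}{2g}B_{2j}(2^{2g+2j}-1)H^{(2g+2j)}_{\frac{p-1}{2}}\left(\frac{p}{2}\right)^{2j},$$
and expand each factor $H^{(2g+2j)}_{\frac{p-1}{2}}$ inside $S$ by means of \eqref{e10eeeff}, separating the even-order from the odd-order contributions on the right-hand side. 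Setting $2k=2j+2h$ and interchanging the order of summation produces two inner sums of the form $\sum_{j=1}^{\min(n,k)}\binom{2k-1}{2j-1}B_{2j}$ and $\sum_{j=1}^{\min(n,k)}\binom{2k}{2j-1}B_{2j}$, each of which is precisely one of the four expressions evaluated in Lemma \ref{lem1}.

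For $k\le n$ these inner sums collapse to $B_{2k}+B_{2k-1}+\frac{1}{2}$ and $\frac{1}{2}-B_{2k}$ respectively, while for $k=n+1$ Lemma \ref{lem1} returns $\frac{1}{2}$ and $\frac{1}{2}-(2n+3)B_{2n+2}$; the residual $k\ge n+2$ tails are controlled through Von Staudt--Clausen (each inner sum has $p$-adic valuation at least $-1$) together with the first-order congruence \eqref{e10eeee}, which guarantees $(2^{2g+2k}-1)H^{(2g+2k)}_{\frac{p-1}{2}}\equiv 0\pmod p$ and supplies the missing power. After these substitutions the same sum $S$ reappears on both sides, and moving it to the left, discarding odd-indexed Bernoulli numbers beyond $B_1$, and merging the even-$k$ with the odd-$k$ contributions produces the combination $2B_{k+2}+(-1)^k B_{k+1}+\frac{1}{2}$, i.e.\ exactly $C_k$. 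A final application of \eqref{e10eeee} to relate $H^{(2g+2n+2)}_{\frac{p-1}{2}}$ to $H^{(2g+2n+3)}_{\frac{p-1}{2}}$ modulo $p^2$ would merge the two $k=n+1$ boundary lines into a single term carrying the factor $\frac{n}{2}+(2n+3)B_{2n+2}$. Dividing through by $p^2$ and renaming $g\mapsto i-1$ would yield the stated congruence modulo $p^{2n+2}$, which is the case $m=0$.

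The refinement $m=1$ then follows directly from Von Staudt--Clausen applied to the boundary factor: when $p>2n+1$ the denominator of $B_{2n+2}$ is coprime to $p$, so $\frac{n}{2}+(2n+3)B_{2n+2}$ is $p$-integral and the $p^{2n+1}$ weight absorbs one extra power of $p$ after the division by $p^2$; the case $m=2$ is literally the extra vanishing condition assumed on the boundary term in the hypothesis. I expect the main obstacle to be the careful bookkeeping in the re-indexing and parity split, and in particular verifying that every error arising in the $k\ge n+2$ tails really is $\equiv 0\pmod{p^{2n+4}}$: this requires simultaneous use of Von Staudt--Clausen on the Bernoulli sums and of \eqref{e10eeee} on the Harmonic factor, and any slip in the exponents would erase the $p^2$ cushion needed for the final division.
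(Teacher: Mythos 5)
Your plan reproduces the paper's proof essentially step for step: the same auxiliary sum $S$, the same substitution of \eqref{e10eeeff}, the same re-indexing with $2k=2j+2h$ and appeal to Lemma \ref{lem1}, the same control of the $k\ge n+2$ tails via Von Staudt--Clausen together with \eqref{e10eeee}, and the same merging of the $k=n+1$ boundary terms into the factor $\frac{n}{2}+(2n+3)B_{2n+2}$ before dividing by $p^2$ and renaming $g$. The only caveat is in your justification of $m=1$: for $p=2n+3$ the denominator of $B_{2n+2}$ \emph{is} divisible by $p$ (since $p-1\mid 2n+2$), so one must instead observe that $(2n+3)B_{2n+2}=pB_{p-1}$ is $p$-integral, which is exactly how the paper handles that case.
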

\begin{col}\label{Remark kk} When $i=1$, $m=0$, we have the following $p$-adically converging series, for $p$ an odd prime:  
\begin{equation}\label{eecjj}\sum_{0\le j}\left(2B_{j+2}+(-1)^jB_{j+1}+\frac{1}{2}\right)\left(2^{j+2}-1\right)H^{(j+2)}_\frac{p-1}{2}\left(\frac{p}{2}\right)^j=0. \end{equation} 
\end{col}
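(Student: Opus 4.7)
The plan is to obtain \eqref{eecjj} as a direct $p$-adic limit of the finite congruence in Theorem \ref{H2kp-1/2}, specialized to $i=1$ and $m=0$. The only odd-prime hypothesis of that theorem is needed, which matches the hypothesis of the corollary.

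First I would substitute $i=1$ into \eqref{eecj}. The binomial coefficient collapses,
$$\binom{j+2i-1}{j+1}=\binom{j+1}{j+1}=1,$$
and the factor $\frac{2^{j+2i}-1}{2^j}p^j$ rewrites as $(2^{j+2}-1)(p/2)^j$. Therefore Theorem \ref{H2kp-1/2}, in the case $i=1$, $m=0$, asserts precisely that for every $n\ge1$
$$\sum_{j=0}^{2n-1}\left(2B_{j+2}+(-1)^jB_{j+1}+\tfrac{1}{2}\right)(2^{j+2}-1)H^{(j+2)}_{\frac{p-1}{2}}\left(\tfrac{p}{2}\right)^j\equiv 0\pmod{p^{2n}}.$$

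Next I would verify that the corresponding infinite series converges in $\mathbb{Q}_p$, so that \eqref{eecjj} actually makes sense. The factor $H^{(j+2)}_{(p-1)/2}$ is $p$-integral, and by the Von Staudt--Clausen theorem \eqref{e2e} the Bernoulli numbers $B_{j+1}, B_{j+2}$ appearing in $C_j$ have denominators divisible by $p$ at most once. Thus $C_j$ has $p$-adic valuation at least $-1$, and the $j$-th term of the series has $p$-adic valuation at least $j-1$. This bound guarantees $p$-adic convergence of the infinite sum, and moreover shows that for every $n$ the tail $\sum_{j\ge 2n}$ is $\equiv 0 \pmod{p^{2n-1}}$.

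Finally I would take the $p$-adic limit as $n\to\infty$. For any fixed $N$, choosing $n$ with $2n-1\ge N$ and combining the congruence above with the tail estimate, the infinite sum is $\equiv 0 \pmod{p^N}$. Hence it is the zero element of $\mathbb{Q}_p$, which is \eqref{eecjj}. The main obstacle is essentially notational rather than conceptual: one must check that the truncation error and the error term from Theorem \ref{H2kp-1/2} are compatible (both of order at least $p^{2n-1}$), which the Von Staudt--Clausen bound ensures.
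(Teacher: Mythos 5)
Your proposal is correct and is exactly the argument the paper intends (the corollary is stated without explicit proof as an immediate consequence of Theorem \ref{H2kp-1/2}): specialize \eqref{eecj} to $i=1$, $m=0$ so the partial sums vanish modulo $p^{2n}$, and use the Von Staudt--Clausen bound $v_p(C_j)\ge -1$ together with the $p$-integrality of $H^{(j+2)}_{\frac{p-1}{2}}$ to control the tail and pass to the $p$-adic limit.
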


\noindent \text{\bf{Remark.}} Note that \eqref{eecjj} coincide with \eqref{e10eeeffx} up to $j\le1$ only.\\
\noindent \text{\bf{Remark.}} When $i=1$, $m=1$, Theorem \ref{H2kp-1/2} gives 
\begin{align*}H^{(2)}_\frac{p-1}{2} + \frac{7}{6}H^{(3)}_\frac{p-1}{2}p &\equiv 0 \pmod {p^{3}} \text { \ \ \ when  } p> 3\\   
H^{(2)}_\frac{p-1}{2} + \frac{7}{6}H^{(3)}_\frac{p-1}{2}p+\frac{13}{8}H^{(4)}_\frac{p-1}{2}p^2+\frac{31}{15}H^{(5)}_\frac{p-1}{2}p^3 &\equiv 0 \pmod {p^{5}} \text { \ \ \ when  } p> 5 \\
& \text{etc.} \end{align*}
\noindent \text{\bf{Remark.}} Note also how these congruences differ from what can be obtained from Theorem \ref{prop4} and its Corollary. They read differently from $j>1$ and they don't need the same restrictions on $p$. For example, \eqref{e10eeeff} limited at  order $ j \le 4$,  combined with \eqref{e10eeee} leads  to
\begin{align*}  
H^{(2)}_\frac{p-1}{2} + \frac{7}{6}H^{(3)}_\frac{p-1}{2}p+\frac{15}{8}H^{(4)}_\frac{p-1}{2}p^2+\frac{31}{12}H^{(5)}_\frac{p-1}{2}p^3 &\equiv 0 \pmod {p^{5}} \text { \ \ \ when  } p> 3.
 \end{align*}
\\ \
\begin{col}\label{Remark kkk} There are quite many cases where the congruence from Theorem \ref{H2kp-1/2} holds modulo $ p^{2n+2}$.
Most notably: \begin{align*} 
m&=2 \text{ when }i\ge2 \text{ and }p=2n+3, \\
m&=2 \text{ when } (p,p-2n-2i-1) \text{ is an irregular pair, } \\
m&=2 \text{ when }  p=2^{2n+2i+1}-1 \text{ ($p$ is a Mersenne prime), }\\
m&=2 \text{ when }  p=2n+2i+1 \text{ is a Wieferich prime ($p=1093, 3511,..$\cite{OEIS1}). }
 \end{align*}
\begin{proof}
The first case is clear because when $p=2n+3$, $(2n+3)B_{2n+2}+\frac{n}{2}$ is $p$-integral, and $2n+3$ divides $\binom{2n+2i}{2n+2}$ as soon as $i\ge 2$. The second case follows from \eqref{e9bbb}. The third case is clear because $2^{2n+2i+1}-2 > 2n+2$. The last case derives from \eqref{e9bbs} : $  H^{(p)}_{\frac{p-1}{2}}\equiv  \frac{2(1-2^{p-1})}{p}B_{0}\bmod {p}$ because by definition a Wieferich prime $p$ satisfies $2^{p-1}-1 \equiv 0 \bmod p^2$.
\end{proof}  
\end{col}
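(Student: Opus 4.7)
The plan is to reduce each of the four cases to the hypothesis of the $m=2$ clause of Theorem~\ref{H2kp-1/2}, namely that
$$P := \binom{2n+2i}{2n+2}\bigl(2^{2n+2i+1}-1\bigr) H^{(2n+2i+1)}_{\frac{p-1}{2}}\left((2n+3)B_{2n+2}+\frac{n}{2}\right)\equiv 0\pmod p.$$
For each case I will exhibit one of the four factors of $P$ that carries the required power of $p$ and then verify that the remaining factors are $p$-integral. Two standing observations are convenient: $H^{(2n+2i+1)}_{(p-1)/2}$ is automatically $p$-integral because every denominator in the defining sum is smaller than $p$; and $(2n+3)B_{2n+2}+n/2$ is $p$-integral whenever $B_{2n+2}$ has no $p$ in its denominator, which by Von~Staudt--Clausen \eqref{e2e} holds as soon as $p-1\nmid 2n+2$. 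The unique exception $p=2n+3$ is harmless, since there $pB_{p-1}$ is $p$-integral (and in fact $\equiv -1\pmod p$) by the same theorem.

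For the case $p=2n+3$ with $i\ge 2$, the binomial coefficient $\binom{2n+2i}{2n+2}=\binom{(p-1)+(2i-2)}{p-1}$ carries the factor of $p$. Indeed, Kummer's carry theorem shows that the base-$p$ addition $(p-1)+(2i-2)$ produces at least one carry as soon as $2i-2\ge 1$, so $p\mid\binom{2n+2i}{2n+2}$ precisely when $i\ge 2$; at $i=1$ the binomial becomes $\binom{2n+2}{2n+2}=1$ and no factor of $p$ survives, which explains the hypothesis. The Bernoulli factor reduces to $pB_{p-1}+n/2$, which is $p$-integral by the standing observation, so $P\equiv 0\pmod p$.

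For the irregular-pair case $(p,p-2n-2i-1)$ and the Wieferich case $p=2n+2i+1$, the factor carrying the $p$ is $H^{(2n+2i+1)}_{(p-1)/2}$. In the irregular case, $p\ge 2(n+i)+3$ by definition of irregular pair, so \eqref{e9bbs} applies with $k=n+i$ and yields
$$H^{(2n+2i+1)}_{\frac{p-1}{2}}\equiv \frac{2(1-4^{n+i})}{2n+2i+1}B_{p-2n-2i-1}\equiv 0\pmod p,$$
as already recorded in \eqref{e9bbb}. For a Wieferich prime $p=2n+2i+1$, Fermat's little theorem gives $H^{(p)}_{(p-1)/2}\equiv H_{(p-1)/2}\pmod p$, and then Eisenstein's congruence \eqref{1.3}, together with the Wieferich hypothesis $q_p\equiv 0\pmod p$, forces $H^{(p)}_{(p-1)/2}\equiv -2q_p\equiv 0\pmod p$.

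Finally, for the Mersenne case $p=2^{2n+2i+1}-1$, the factor $2^{2n+2i+1}-1=p$ trivially contributes the required power of $p$. The only integrality point to check is that $B_{2n+2}$ is $p$-integral, i.e., $p-1\nmid 2n+2$; but $p-1=2^{2n+2i+1}-2$ comfortably exceeds $2n+2$ for all admissible $n,i\ge 1$, so this is automatic. I expect the subtlest point of the whole argument to be the binomial divisibility in Case~1: once one notices the shift $2n+2=p-1$ and invokes Kummer's theorem, the rest is essentially bookkeeping.
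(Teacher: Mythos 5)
Your argument follows the paper's proof case for case: in each of the four situations you isolate the same factor of the product $P$ that the paper does (the binomial coefficient when $p=2n+3$ and $i\ge 2$; the vanishing of $H^{(2n+2i+1)}_{(p-1)/2}$ via \eqref{e9bbb} in the irregular-pair case and via Eisenstein's congruence plus $q_p\equiv 0 \bmod p$ in the Wieferich case; the factor $2^{2n+2i+1}-1=p$ in the Mersenne case), and your integrality bookkeeping for the remaining factors, including the exceptional but harmless $pB_{p-1}$ when $p-1\mid 2n+2$, matches what the paper leaves implicit. On three of the four cases you are in complete agreement with the source.

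The one place where you go beyond the paper --- the Kummer-carry justification of $p\mid\binom{2n+2i}{2n+2}$ for $p=2n+3$, $i\ge2$ --- is where a genuine gap sits. Adding $p-1$ and $2i-2$ in base $p$ produces a carry exactly when the units digit of $2i-2$ is nonzero, i.e.\ exactly when $p\nmid i-1$; if $i\equiv 1\pmod p$ the addition is carry-free and the binomial coefficient is a unit modulo $p$ (by Lucas, $\binom{p-1+pm}{p-1}\equiv\binom{m}{0}\binom{p-1}{p-1}=1$). A concrete failure is $n=1$, $p=5$, $i=6$: $\binom{14}{4}=1001=7\cdot11\cdot13$, and one can check that none of the other three factors of $P$ supplies the missing factor of $5$, so the sufficient condition of Theorem \ref{H2kp-1/2} genuinely fails there. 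To be fair, the paper itself asserts that ``$2n+3$ divides $\binom{2n+2i}{2n+2}$ as soon as $i\ge 2$'' with no justification at all, so the defect is inherited rather than introduced; but your attempted proof is where it becomes visible, and the first case should carry the additional hypothesis $p\nmid i-1$, which is automatic in the range $2\le i\le p$ but not beyond it.
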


\noindent \text{\bf{Examples.}} (i) When $n=1=i$ and $p=37$, $(p,p-2i-2n-1)$ is an irregular pair, so $m=2$. We check: 
$$\sum_{j=1}^{18} \frac{1}{j^2} +37\frac{7}{6} \sum_{j=1}^{18} \frac{1}{j^3} =\frac{9356942544006649495921}	{175168974229337088000}$$
and $9356942544006649495921= 19\cdot37^4\cdot262768598968219$ is divisible by $37^4$.\\
(ii) When $n=1=i$ and $p=31=2^{2n+2i+1}-1$ so $m=2$ as well. We check that 
$$\sum_{j=1}^{15} \frac{1}{j^2} +31\frac{7}{6} \sum_{j=1}^{15}\frac{1}{j^3} =\frac{1804176116127398723}	{40110949726848000}$$
and $1804176116127398723= 19\cdot31^4\cdot619\cdot809\cdot3901153 $ is indeed divisible by $31^4$.\\
(iii) And at last, when $n=1$, $i=2$, $p=2n+3=5$, we also have $m=2$ as well. We check that
\begin{align*}\binom{3}{1}\frac{2^{4}-1}{2^0} C_0H^{(4)}_\frac{5-1}{2}5^0+\binom{4}{2}\frac{2^{5}-1}{2^1} C_1H^{(5)}_\frac{5-1}{2}5^1 &=15\left(1+\frac{1}{16}\right)+5\cdot31 \left(1+\frac{1}{32}\right)\\
&=\frac{3^2\cdot5^4}{2^5} \text{ \  is indeed divisible by $5^4$.} \end {align*}

\section{Extended congruences for  $H^{(2j-1)}_{\frac{p-1}{2}}$   ($j\ge 1$). }

The case of  $H^{(2j-1)}_{\frac{p-1}{2}}$   ($j\ge 1$) has not been adressed in Section 3. This is a more complicated case as it does not involve only Bernoulli numbers, but also the base-$2$ Fermat quotient. We begin with two propositions. 

\begin{prop}\label{4.1} Let $p$ be an odd prime and $n$  an integer, such that $p>\frac{n+1}{2}\ge 1$. We have the following congruence: 
\begin{equation}\label{4.1}\begin{split}H_{\frac{p-1}{2}}&+2\sum_{j=0}^{n-1}(-1)^j\frac{q_p^{j+1}}{j+1}p^j+2\delta^{n+1}_p q_pp^{n-1}\\
&+\sum_{1\le i <\frac{n+1}{2}}\frac{B_{p^{n-1}(p-1)-2i}}{2i}(2^{2i+1}-1)\left(\frac{p}{2}\right)^{2i} \equiv 0 \pmod {p^n}.\end{split}
\end{equation}
\end{prop}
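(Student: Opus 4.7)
The plan is to reduce $H_{\frac{p-1}{2}}$ to a Bernoulli polynomial value via Euler's theorem and Faulhaber's formula, then expand in powers of $p$ so that the zeroth order contribution yields the Fermat-quotient terms (via Lemma \ref{2.6}) while the even-order $m=2i\ge 2$ contributions yield the Bernoulli sum. Set $N:=p^{n-1}(p-1)$. By Euler's theorem $j^{N-1}\equiv j^{-1}\pmod{p^n}$ for $1\le j\le p-1$, so $H_{\frac{p-1}{2}}\equiv\sum_{j=1}^{(p-1)/2}j^{N-1}\pmod{p^n}$. Faulhaber's formula \eqref{e2d} is equivalent to $\sum_{j=1}^M j^{N-1}=(B_N(M+1)-B_N)/N$; applied at $M=(p-1)/2$, together with the reflection $B_N(1-x)=(-1)^N B_N(x)$ (using that $N$ is even, which gives $B_N((p+1)/2)=B_N((1-p)/2)$), this yields $H_{\frac{p-1}{2}}\equiv(B_N((1-p)/2)-B_N)/N\pmod{p^n}$.

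Next I expand $B_N((1-p)/2)=\sum_k\binom{N}{k}B_k((1-p)/2)^{N-k}$, expand $(1-p)^{N-k}$ binomially, swap the order of summation via $\binom{N}{k}\binom{N-k}{m}=\binom{N}{m}\binom{N-m}{k}$, and apply \eqref{aze1bis} of Lemma \ref{ww} to collapse the inner sum. The result is $B_N((1-p)/2)=2^{-N}\sum_{m=0}^{N}(-p)^m\binom{N}{m}B_{N-m}(2-2^{N-m})$. Odd $m$'s contribute nothing, either because $B_{N-m}=0$ (when $N-m$ is odd and $\ge 3$) or because $2-2^{N-m}=0$ (when $m=N-1$); the boundary $m=N$ gives $p^N/(N\cdot 2^N)$, which has $v_p\ge N-(n-1)\ge n$ under the hypothesis and so vanishes. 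The $m=0$ term, combined with the subtracted $B_N$, becomes $-2B_N(2^N-1)/(N\cdot 2^N)$. Invoking Lemma \ref{2.5} equation \eqref{251} (so $pB_N\equiv p-1\pmod{p^n}$) and $2^N\equiv 1\pmod{p^n}$, this simplifies to $-2(2^N-1)/p^n$ modulo $p^n$, which by Lemma \ref{2.6} equals $-2\sum_{j=0}^{n-1}(-1)^j q_p^{j+1}/(j+1)\,p^j-2\delta^{n+1}_p q_p p^{n-1}\pmod{p^n}$, i.e.\ minus the Fermat-quotient expression of the proposition.

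For even $m=2i$ with $1\le i\le N/2-1$, I rewrite $\binom{N}{2i}/N=\binom{N-1}{2i-1}/(2i)$ and use $(2-2^{N-2i})/2^N\equiv(2^{2i+1}-1)/2^{2i}\pmod{p^n}$ (which follows from $2^{N-1}\equiv 2^{-1}$). Lemma \ref{binom} (with its internal parameter equal to $1$) gives $(2i-1)!\,(\binom{N-1}{2i-1}+1)\equiv 0\pmod{p^{n-1}}$, so substituting $\binom{N-1}{2i-1}\mapsto -1$ costs an error of $v_p\ge n-2+2i-v_p((2i)!)\ge n$, the last inequality coming from Legendre's formula together with Von Staudt--Clausen $v_p(B_{N-2i})\ge -1$. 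The same bookkeeping shows that for $i\ge(n+1)/2$ the whole term already has $v_p\ge 2i-v_p(i)-1\ge n$ and hence vanishes. What survives is exactly $-\sum_{1\le i<(n+1)/2} B_{N-2i}(2^{2i+1}-1)(p/2)^{2i}/(2i)$, i.e.\ minus the Bernoulli sum in the proposition; combining both contributions gives \eqref{4.1}.

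The hard part is this final $p$-adic bookkeeping. One must simultaneously track the $n-1$ powers of $p$ lost when dividing by $N$, the fact that $v_p(B_{N-2i})=-1$ whenever $(p-1)\mid 2i$, and the powers of $p$ that may be hidden in $2i$. The tail estimate $v_p(\text{term})\ge 2i-v_p(i)-1$ gives $\ge n$ immediately when $p\nmid i$ (since then $2i\ge n+1$), whereas for $i=p^a b$ with $a\ge 1$ the hypothesis $p>(n+1)/2$ (equivalent to $2p\ge n+2$) is exactly what is needed to conclude $2p^a b-a-1\ge n$. This is where the quantitative assumption on $p$ is used, and it is tight.
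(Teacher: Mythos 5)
Your proposal is correct and follows essentially the same route as the paper: reduce $H_{\frac{p-1}{2}}$ to $\sum_{j=1}^{(p-1)/2} j^{p^{n-1}(p-1)-1}$ by Euler's theorem, expand via Faulhaber, collapse the inner Bernoulli--binomial sum with \eqref{aze1bis}, extract the Fermat-quotient part from the $m=0$ term via \eqref{251} and Lemma \ref{2.6}, replace $\binom{N-1}{2i-1}$ by $-1$ via Lemma \ref{binom}, and truncate the tail using $p>\frac{n+1}{2}$. The only (cosmetic) difference is that you package the Faulhaber expansion through the Bernoulli polynomial $B_N\bigl(\tfrac{1-p}{2}\bigr)$ and the reflection formula, whereas the paper manipulates the binomial sums directly to reach its intermediate identity \eqref{4.2}; the $p$-adic bookkeeping at the end is the same.
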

\noindent \text{\bf{Remark.}} This proposition is very close to a particular case of Corollary 4.3 in \cite{Lin18} which was obtained from the theory of $p$-adic L-functions. Our statement here is stronger: at the small cost of the additional $2\delta^{n+1}_p q_pp^{n-1}$ term, we can be less restrictive on $p$.
\begin {proof}[Proof of Proposition \ref{4.1}] The proof starts from Faulhaber formula \eqref{e2d}. We have  
\begin{align*}\label{} \sum_{j=1 }^\frac{p-1}{2}j^i&=\frac{1}{i+1}\sum_{g=0}^i(-1)^g\binom{i+1}{g} B_g \left(\frac{p-1}{2}\right)^{i+1-g}\\
&=\frac{1}{i+1}\sum_{g=1}^{i+1}(-1)^{i+1-g}\binom{i+1}{g} B_{i+1-g} \left(\frac{p-1}{2}\right)^g\\
&=\frac{1}{i+1}\sum_{g=0}^i(-1)^{i-g} \frac{\binom{i+1}{g+1} B_{i-g} }{2^{g+1}}(p-1)^{g+1}\\
&=\frac{1}{i+1}\sum_{g=0}^i(-1)^{i-g} \frac{\binom{i+1}{g+1} B_{i-g} }{2^{g+1}}\sum_{m\ge 0}p^m (-1)^{g+1-m}\binom{g+1}{m}.
   \end{align*}
After inversion of the summations, with $\binom{i+1}{g+1}\binom{g+1}{m}=\binom{i+1}{m}\binom{i+1-m}{i-g}$ and some rearrangement, we obtain

\begin{align*}\label{} \sum_{j=1 }^\frac{p-1}{2}j^i=&\frac{(-1)^{i+1}}{(i+1)2^{i+1}} \sum_{0\le m}(-1)^m p^m\binom{i+1}{m}\sum_{g=0}^i 2^g\binom{i+1-m}{g}B_g\\
=&\frac{(-1)^{i+1}}{(i+1)2^{i+1}}\sum_{g=0}^i 2^g\binom{i+1}{g}B_g\\
+&\frac{(-1)^{i+1}}{(i+1)2^{i+1}} \sum_{1\le m}(-1)^m p^{m}\binom{i+1}{m}\sum_{g=0}^i 2^g\binom{i+1-m}{g}B_g\\
=&\frac{(-1)^{i+1}}{(i+1)2^{i+1}}\sum_{g=0}^{i+1} 2^g\binom{i+1}{g}B_g -\frac{(-1)^{i+1}}{(i+1)} B_{i+1}  \\
-&\frac{(-1)^{i+1}}{(i+1)2^{i+1}} \sum_{0\le m}(-1)^m p^{m+1}\binom{i+1}{m+1}\sum_{g=0}^{i-m} 2^g\binom{i-m}{g}B_g.
   \end{align*}
   
\noindent Then, accounting for \eqref{aze1bis} from Lemma \ref{ww} and rearranging, we have
\begin{equation}\label{4.2}  \sum_{j=1 }^\frac{p-1}{2}j^i=\frac{(-1)^{i+1}}{(i+1)2^{i}}B_{i+1}(1-2^{i+1}) +\sum_{m\ge0}\frac{(-1)^{m+i}p^{m+1}}{(m+1)!}m!\binom{i}{m}B_{i-m}\left(\frac{1}{2^i}-\frac{1}{2^{m+1}} \right).
   \end{equation}

\noindent Let $i=p^{n-1}(p-1)-1$.\\By Euler theorem, $2^i \equiv \frac{1}{2} \bmod p^n$ and $ H_{\frac{p-1}{2}}\equiv \sum_{j=1} ^\frac{p-1}{2}j^{p^{n-1}(p-1)-1} \bmod p^n$, so that
\begin{align*}H_{\frac{p-1}{2}}&\equiv \frac{2pB_{p^{n-1}(p-1)}}{p-1}\frac{1-2^{p^{n-1}(p-1)}}{p^n} \\
&-\sum_{m\ge1}\frac{(-1)^mp^{m}}{(m+1)!}m!\binom{p^{n-1}(p-1)-1}{m}pB_{p^{n-1}(p-1)-1-m}\left(2-\frac{1}{2^{m+1}} \right)\\
&\pmod {p^n}.
\end {align*}

\noindent Note that the sum now starts from $m=1$, because $B_{p^{n-1}(p-1)-1}=0$. Now, we simplify the sum by accounting for Lemma \ref{lemjohn} and Lemma \ref{binom} and we also make use of Lemma \ref{2.5} and  Lemma \ref{2.6}, so that

\begin{align*}H_{\frac{p-1}{2}}&\equiv -2\sum_{j=0}^{n-1}(-1)^j\frac{q_p^{j+1}}{j+1}p^j -2\delta^{n+1}_p q_pp^{n-1}\\
&-\sum_{m\ge1}\frac{p^{m+1}}{m+1}B_{p^{n-1}(p-1)-1-m}\left(2-\frac{1}{2^{m+1}} \right) \pmod {p^n}.
\end {align*}

\noindent Accounting for the vanishing Bernoulli numbers, when $m$ is even in the above congruence (nota: when $m=p^{n-1}(p-1)-2$, the Bernoulli number does not vanish: it is $-\frac{1}{2}$ which is $p$-integral, but since $p^{n-1}(p-1)-1 \ge n$ for $p\ge3$,  the corresponding term in the sum is clearly zero $\bmod  \  p^n$), we can then write

\begin{align*}H_{\frac{p-1}{2}}&\equiv -2\sum_{j=0}^{n-1}\frac{(-p)^jq_p^{j+1}}{j+1} -2\delta^{n+1}_p q_pp^{n-1}-\sum_{m\ge1}\frac{p^{2m}}{2m}B_{p^{n-1}(p-1)-2m}\left(2-\frac{1}{2^{2m}} \right)\\
&\pmod {p^n}.
\end {align*}

\noindent To complete the proof of \eqref{4.1}, we argue that the second sum on the right hand side of the above congruence may be limited to $m<\frac{n+1}{2}$, because
when $m \ge\frac{n+1}{2}$, $\frac{p^{2m}}{2m}$ is divisible by $p^{n+1}$: this is obviously the case when $p$ does not divides $m$. When $p$ divides $m$, let $p^k$ be the largest power of $p$ which divides $m$. Suppose $2m-k <n+1$ then $2p^k-k <n+1$ but by hypothesis $n+1 < 2p $, then $2p-2p^k+k \ge 2 $ and this is not possible when $k>0$.
\end {proof}

\ 

\begin{prop}  Let $p$ be an odd prime. Let $n,h$ integers and $p>\frac{n+1}{2}> h\ge 1$. The following congruence holds:
\begin{equation}\label{4.3} \begin{split}
H^{2h+1}_{\frac{p-1}{2}}&+ (2^{2h+1}-2)\frac{B_{p^{n-1}(p-1)-2h}}{2h} \\
&+\sum_{i=1}^\frac{n-1}{2}p^{2i}\binom{2i+2h}{2i}\frac{B_{p^{n-1}(p-1)-2(h+i)}}{2(h+i)}\left(2^{2h+1}-\frac{1}{2^{2i}} \right) \equiv0 \pmod {p^{n-1}}.
\end{split}
\end{equation}

\begin {proof}We start from \eqref{4.2}, with $i=p^{n-1}(p-1)-2h-1$. Again by Euler theorem, we have  $2^i \equiv \frac{1}{2^{2h+1}} \bmod p^n$, $ H^{2h+1}_{\frac{p-1}{2}}\equiv \sum_{j=1}^\frac{p-1}{2}j^{p^{n-1}(p-1)-2h-1} \bmod p^n$  and $2^{p^{n-1}(p-1)-2h} \equiv 2^{-2h}\bmod p^n $, so that, by the same argumentation as in the proof of the previous proposition, we have\\  
\begin{align*}H^{2h+1}_{\frac{p-1}{2}}&\equiv \frac{2^{2h+1}(1-2^{-2h})}{p^{n-1}(p-1)-2h}B_{p^{n-1}(p-1)-2h} \\
&-\sum_{m\ge1}\frac{p^{m+1}}{m+1}\binom{m+2h}{m}B_{p^{n-1}(p-1)-2h-1-m}\left(2^{2h+1}-\frac{1}{2^{m+1}} \right) \pmod {p^n}.
\end {align*}
We now account for the vanishing Bernoulli numbers, when $m$ is even in the above congruence. Nota: when $m=p^{n-1}(p-1)-2h-2$, the Bernoulli number is $-\frac{1}{2}$ (which is $p$-integral), but in this case, since $n\ge 2h$ by hypothesis, we have  $p^{n-1}(p-1)-2h-1 \ge p^{n-1}(p-1)-n-1 \ge n+1$ for $p\ge3$ and $n \ge 2$. Moreover $m+1= p^{n-1}(p-1)-2h-1 \equiv -(2h+1) \bmod p$, therefore, since $h<p$ by hypothesis, $p$ may divide $m+1$ only when $h= \frac{p-1}{2}$, and $p$ is the highest power of $p$ which may divide $m+1$. Then the term of index $m=p^{n-1}(p-1)-2h-2$ in the sum is zero $\bmod  \  p^n$. We can then write
\begin{align*}H^{2h+1}_{\frac{p-1}{2}}&\equiv \frac{2^{2h+1}(1-2^{-2h})}{p^{n-1}(p-1)-2h}B_{p^{n-1}(p-1)-2h} \\
&-\sum_{m\ge1}\frac{p^{2m}}{2m}\binom{2m+2h-1}{2m-1}B_{p^{n-1}(p-1)-2(h+m)}\left(2^{2h+1}-\frac{1}{2^{2m}} \right) \pmod {p^n}\\
&\equiv \frac{2^{2h+1}(1-2^{-2h})}{p^{n-1}(p-1)-2h}B_{p^{n-1}(p-1)-2h} \\
&-\sum_{m\ge1}p^{2m}\binom{2m+2h}{2m}\frac{B_{p^{n-1}(p-1)-2(h+m)}}{2(h+m)}\left(2^{2h+1}-\frac{1}{2^{2m}} \right) \pmod {p^n}.
\end {align*}
Reducing $\bmod \  p^{n-1}$, we obtain
\begin{align*}H^{2h+1}_{\frac{p-1}{2}}&\equiv \frac{2-2^{2h+1}}{2h}B_{p^{n-1}(p-1)-2h} \\
&-\sum_{m\ge1}p^{2m}\binom{2m+2h}{2m}\frac{B_{p^{n-1}(p-1)-2(h+m)}}{2(h+m)}\left(2^{2h+1}-\frac{1}{2^{2m}} \right) \pmod {p^{n-1}}.
\end {align*}
To complete the proof of \eqref{4.3}, we argue that the second sum on the right hand side of the above congruence may be limited to $m\le\frac{n-1}{2}$, because the summands may also be written like 
$$ \frac{p^{2m}}{2m}\binom{2m+2h-1}{2m-1}B_{p^{n-1}(p-1)-2(h+m)}\left(2^{2h+1}-\frac{1}{2^{2m}} \right) $$
and when $m>\frac{n-1}{2}$, $\frac{p^{2m}}{2m}$ is divisible by $p^{n}$: this is obviously the case when $p$ does not divides $m$. When $p$ divides $m$, let $p^k$ be the largest power of $p$ which divides $m$. Suppose $2m-k <n$ then $2p^k-k <n$ but by hypothesis $n+1 < 2p $, so $2p-2p^k+k \ge 3$. This is impossible when $k>0$.
\end {proof}
\end{prop}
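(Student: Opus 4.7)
My plan is to mirror the strategy of Proposition 4.1, but starting from Faulhaber's identity at exponent $i = p^{n-1}(p-1) - (2h+1)$ rather than $p^{n-1}(p-1) - 1$. The key observation is that identity \eqref{4.2}, already derived in the previous proof, is available for \emph{any} exponent $i$, so I would simply substitute this new value. By Euler's theorem applied to each $j \in \{1,\dots,\frac{p-1}{2}\}$, the left-hand side $\sum_{j=1}^{(p-1)/2} j^i$ collapses to $H^{(2h+1)}_{(p-1)/2}$ modulo $p^n$, while Fermat's little theorem gives $2^{i+1} = 2^{p^{n-1}(p-1)-2h} \equiv 2^{-2h} \pmod{p^n}$. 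Combined, this reduces the leading term of \eqref{4.2} to $\frac{2^{2h+1}(1-2^{-2h})}{p^{n-1}(p-1)-2h}\, B_{p^{n-1}(p-1)-2h}$. Since Von Staudt--Clausen shows $pB_{p^{n-1}(p-1)-2h}$ is $p$-integral and $p^{n-1}(p-1) - 2h \equiv -2h \pmod{p^{n-1}}$, this expression reduces modulo $p^{n-1}$ to $-\frac{2^{2h+1}-2}{2h}B_{p^{n-1}(p-1)-2h}$, matching the second term of \eqref{4.3}.

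Next I would process the residual series in \eqref{4.2} term by term. Applying Lemma \ref{binom} with $k = 2h+1$, the factor $m!\binom{i}{m}$ is replaced modulo $p^{n-1}$ by $(-1)^m m!\binom{2h+m}{m}$. Since $p^{n-1}(p-1)$ is even and $2h+1$ is odd, the Bernoulli index $i-m$ is even precisely when $m$ is odd; by \eqref{e2bbis} all even-$m$ terms vanish except for the single anomaly at $m = p^{n-1}(p-1) - 2h - 2$ where $B_1 = -\tfrac12$. I would dispose of this anomaly by noting that $m+1 \equiv -(2h+1) \pmod p$, and the hypothesis $h < p$ forces $p \nmid (m+1)$, so that $v_p(p^{m+1}/(m+1)) = m+1 \ge n+1$ (using $n \ge 2h \ge 2$), killing the anomalous term modulo $p^n$. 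Restricting to odd $m = 2i-1$ and rewriting $\frac{1}{2i}\binom{2h+2i-1}{2i-1} = \frac{1}{2(h+i)}\binom{2h+2i}{2i}$ produces precisely the summands appearing in the claimed congruence, and substituting $\frac{1}{2^i} \equiv 2^{2h+1} \pmod{p^n}$ delivers the factor $\bigl(2^{2h+1} - \frac{1}{2^{2i}}\bigr)$.

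The remaining task, which I expect to be the main obstacle, is justifying the truncation of the series at $i \le \frac{n-1}{2}$. By Von Staudt--Clausen the Bernoulli factor has at most a single $p$ in its denominator, so it suffices to show $v_p(p^{2i}/(2i)) \ge n$, i.e.\ $v_p(p^{2i-1}/(2i)) \ge n-1$, whenever $i > \frac{n-1}{2}$. When $p \nmid i$ this is immediate from $2i-1 \ge n-1$. When $p \mid i$, writing $i = kp^\ell$ with $\gcd(k,p) = 1$ and using the hypothesis $p > \frac{n+1}{2}$, one obtains $2i - 1 - \ell \ge 2p^\ell - 1 - \ell \ge n-1$, exactly the calculation carried out at the end of the proof of Proposition 4.1. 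With the truncation justified the congruence \eqref{4.3} follows by collecting everything and transferring the leading Bernoulli term to the left-hand side.
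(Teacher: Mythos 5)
Your proposal follows the paper's proof essentially step for step: substitute $i=p^{n-1}(p-1)-2h-1$ into \eqref{4.2}, reduce via Euler's theorem, discard the even-$m$ terms by the vanishing of odd-indexed Bernoulli numbers, rewrite $\frac{1}{2i}\binom{2h+2i-1}{2i-1}=\frac{1}{2(h+i)}\binom{2h+2i}{2i}$, and truncate the series with the same $p$-adic valuation computation. The one slip is your claim that $h<p$ forces $p\nmid(m+1)$ for the anomalous term $m=p^{n-1}(p-1)-2h-2$: since $m+1\equiv-(2h+1)\pmod p$, the case $2h+1=p$ is possible under the hypotheses (e.g.\ $p=3$, $h=1$, $n=4$ gives $m+1=51$) and then $p\mid m+1$; the paper handles this by noting that $v_p(m+1)=1$ exactly in that case, so $v_p\bigl(p^{m+1}/(m+1)\bigr)=m\ge n$ and the anomalous term still vanishes modulo $p^n$.
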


\ \\


\noindent We are now ready for our final result:
\begin{thm}\label{last}  Let $n \ge 1$ be an integer, $p$ an odd prime, $p  > \frac{ n+1}{2}$. We have
\begin{equation}\label{ee20}\sum_{j=0}^{n-1}\frac{(2^{j+1}-1)}{(j+1)}\frac{(2^{j+2}-1)}{(j+2)}\frac{B_{j+2}}{2^{j-1}}H^{(j+1)}_{\frac{p-1}{2}}p^j+\sum_{j=0}^{n-1}(-1)^j\frac{q_p^{j+1}}{j+1}p^j\equiv 0 \pmod {p^n}. \end{equation}
\end{thm}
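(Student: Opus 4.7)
The strategy is to eliminate all occurrences of $H^{(2h+1)}_{\frac{p-1}{2}}$ and of the Fermat-quotient sum in favor of the Bernoulli numbers at shifted indices $\mathcal{B}_k := B_{p^{n-1}(p-1)-2k}/(2k)$, $k\ge 1$, introduced by the two propositions above, and then to reduce the resulting linear combination to an algebraic identity that follows from Lemma~\ref{2.3}. As a first observation, $B_{j+2}=0$ for all odd $j\geq 1$, so only even values $j=2h$ with $0\le h\le\lfloor(n-1)/2\rfloor$ contribute to the first sum in \eqref{ee20}. Setting
$$c_h := \frac{(2^{2h+1}-1)(2^{2h+2}-1)B_{2h+2}}{(2h+1)(2h+2)\,2^{2h-1}},$$
one checks directly that $c_0=\tfrac12$, so the $j=0$ summand is exactly $\tfrac12 H_{\frac{p-1}{2}}$. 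Proposition~\ref{4.1} then allows me to replace the Fermat-quotient sum in \eqref{ee20} by $-\tfrac12 H_{\frac{p-1}{2}}$ (which cancels the $j=0$ term), a correction $-\delta^{n+1}_p q_p p^{n-1}$, and a sum $-\sum_{k\ge 1}(2^{2k+1}-1)p^{2k}\mathcal{B}_k/2^{2k+1}$.

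Next I would expand each of the remaining $H^{(2h+1)}_{\frac{p-1}{2}}$ for $h\ge 1$ using congruence \eqref{4.3}, which expresses it as a linear combination of $\mathcal{B}_h$ and the $\mathcal{B}_{h+i}$ for $i\ge 1$. After gathering like terms, the left-hand side of \eqref{ee20} becomes, modulo $p^n$, a sum of the form $\sum_{k\ge1}p^{2k}\mathcal{B}_k\,\alpha_k$ plus the residual $-\delta^{n+1}_p q_p p^{n-1}$. In the interior range $1\le k\le\lfloor(n-1)/2\rfloor$, the coefficient $\alpha_k$ vanishes identically thanks to the following reformulation of Lemma~\ref{2.3}, obtained by applying it with $k$ replaced by $k+1$, keeping only odd $j$ (since $B_{j+1}=0$ for odd $j+1\ge 3$), substituting $j=2h+1$ and using $\binom{2k+1}{2h+1}=\tfrac{2k+1}{2h+1}\binom{2k}{2h}$:
$$\sum_{h=0}^{k}\binom{2k}{2h}\,c_h\,2^{2h}\;=\;\frac{c_k\,2^{2k}}{2^{2k+1}-1}.$$
The term $(2^{2k+1}-1)/2^{2k+1}$ coming from Proposition~\ref{4.1} is absorbed as the $h=0$ summand since $c_0=\tfrac12$, and the remaining cancellation is a one-line manipulation.

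The technical heart of the proof lies in the boundary values of $k$, where some expected contributions fall outside the index range of the two propositions. For $k=n/2$ (which only occurs when $n$ is even) the $h=k$ single-term from \eqref{4.3} is missing, and for $\lfloor n/2\rfloor < k\le n-1$ both the single-term and the direct $\mathcal{B}_k$ term from Proposition~\ref{4.1} are absent. I would show that every such residue is $\equiv 0\pmod{p^n}$ by a $p$-adic valuation estimate based on the observation that $2^{2k+1}-2=2(2^{2k}-1)$ carries a factor of $p$ exactly when $(p-1)\mid 2k$, which is also the condition under which $\mathcal{B}_k$ acquires a simple pole at $p$ by Von Staudt--Clausen \eqref{e2e}; the two effects offset one another, with Lemmas~\ref{lemjohn} and \ref{2.5} supplying the fine control needed. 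The sole case in which the offset leaves a non-zero remainder is $n=2m$ with $p=n+1$, where the residue at $k=m$ simplifies, via $2^{2m+1}-2=2pq_p$ together with the congruences $p\mathcal{B}_m\equiv 1\pmod p$ and $pc_m\equiv \tfrac12\pmod p$, to exactly $q_p\,p^{n-1}$ and thereby cancels the $\delta^{n+1}_p q_p p^{n-1}$ term produced by Proposition~\ref{4.1}. This final boundary bookkeeping is where I expect the main difficulty to lie.
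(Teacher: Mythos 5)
Your proposal takes essentially the same route as the paper's proof: it eliminates the Fermat-quotient sum via Proposition 4.1 and the odd-order harmonic numbers via \eqref{4.3}, annihilates the interior coefficients with an identity algebraically equivalent to the paper's consequence of Lemma \ref{2.3} (your $\sum_{h=0}^{k}\binom{2k}{2h}c_h2^{2h}=c_k2^{2k}/(2^{2k+1}-1)$ is the paper's $\sum_{h=1}^{k}A_h(2^{2h+1}-1)\binom{2k}{2h}=A_k-1$ in disguise, since $c_h2^{2h}=\tfrac12A_h(2^{2h+1}-1)$), and balances the boundary residue at $k=n/2$ against $\delta^{n+1}_pq_pp^{n-1}$ exactly as in the paper's final congruence \eqref{pp}. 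Your constants for the case $p=n+1$ (namely $p\mathcal{B}_{n/2}\equiv1$, $pc_{n/2}\equiv\tfrac12$, residue $q_pp^{n-1}$) check out, and the pieces you leave as sketches --- the valuation bound discarding $k>n/2$ and the vanishing of the residue when $2p=n+2$ --- are precisely the bookkeeping the paper carries out in detail.
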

\noindent \text{\bf{Remark.}}
	Eisenstein and E. Lehmer congruences \eqref{1.3} and \eqref{1.4} are particular cases of \eqref{ee20}  for $n=1$ and $n=2$, respectively. Next, for $n=3$, we obtain
	\begin{align*}  H_\frac{p-1}{2} -\frac{7}{24}H^{(3)}_\frac{p-1}{2}p^2+2\left(q_p-\frac{q^2_p}{2}p+\frac{q^3_p}{3}p^2\right)& \equiv 0 \pmod{p^3}. \end{align*} By taking \eqref{e9bbs} into account, we recover a congruence originally given by Z.-H. Sun (\cite{Sun00}, Theorem 5.2(c)): 
\begin{align*}  H_\frac{p-1}{2} +\frac{7}{12}B_{p-3}\  p^2+2\left(q_p-\frac{q^2_p}{2}p+\frac{q^3_p}{3}p^2 \right)& \equiv 0 \pmod{p^3}. \end{align*}
Next
\begin{align*}  H_\frac{p-1}{2} -\frac{7}{24}H^{(3)}_\frac{p-1}{2}p^2+2\left(q_p-\frac{q^2_p}{2}p+\frac{q^3_p}{3}p^2 -\frac{q^4_p}{4}p^3\right)& \equiv 0  \pmod{p^4}. \end{align*}
Next, under the condition $p\ge5$, we have
\begin{align*}
H_\frac{p-1}{2}-\frac{7}{24}H^{(3)}_\frac{p-1}{2}p^2+\frac{31}{80}H^{(5)}_\frac{p-1}{2}p^4+2\left(q_p-\frac{q^2_p}{2}p+\frac{q^3_p}{3}p^2 -\frac{q^4_p}{4}p^3+\frac{q^5_p}{5}p^4\right)&\equiv0\\
H_\frac{p-1}{2}-\frac{7}{24}H^{(3)}_\frac{p-1}{2}p^2-\frac{93}{40}B_{p-5}p^4+2\left(q_p - \frac{q^2_p}{2}p+\frac{q^3_p}{3}p^2 -\frac{q^4_p}{4}p^3+\frac{q^5_p}{5}p^4\right)&\equiv0\\
&\pmod{p^5}\\
&\text{etc.}
\end{align*}
 
\noindent \text{\bf{Remark.}} The condition $p > \frac{n+1}{2}$ is sharp. For instance, for $n=5$ and $p=3$, we have $q_p=1$, $H^{(j)}_\frac{p-1}{2}=1$ and the left hand side of \eqref{ee20} is not divisible by $3^5$. Indeed, we check that $1 -\frac{7\cdot 3^2}{24}+\frac{31\cdot 3^4}{80}+2\left(1-\frac{3}{2}+\frac{3^2}{3} -\frac{3^3}{4}+\frac{3^4}{5}\right) =\frac{4293}{80}=\frac{3^4 \cdot 53}{5\cdot 2^4}$.\\

\begin {proof}[Proof of Theorem \ref{last}] It suffices to prove \eqref{ee20} when $n$ is even, because then it is true for $n-1$ since the first sum in \eqref{ee20} is actually limited to $n-2$ when $n$ is even ($B_{n+1}=0$) and $\frac{q_p^{n}}{n}p^{n-1}\equiv 0 \bmod p^{n-1}$ because $p$ does not divide $n$ otherwise $p \le\frac{n}{2}$ which is excluded by hypothesis. So from now on, it is supposed that $n$ is even. For  $h\ge 1$ an integer we define $Z_{n,h}:= \frac{B_{p^{n-1}(p-1)-2h}}{2h}$ and $A_h:=4 \frac{(2^{2h+2}-1)B_{2h+2}}{(2h+1)(2h+2)}$. Congruences \eqref{4.1} and \eqref{4.3} now read, respectively,
\begin{equation}\label{4.5}H_{\frac{p-1}{2}}+2\sum_{j=0}^{n-1}(-1)^j\frac{q_p^{j+1}}{j+1}p^j+2\delta^{n+1}_p q_pp^{n-1}
+\sum_{i=1}^\frac{n}{2}Z_{n,i}(2^{2i+1}-1)\left(\frac{p}{2}\right)^{2i}\equiv 0 \bmod p^n,\end{equation}
\begin{equation}\label{4.6}H^{(2h+1)}_{\frac{p-1}{2}}+Z_{n,h}(2^{2h+1}-2)+\sum_{i=1}^\frac{n-2}{2}\binom{2h+2i}{2h}Z_{n,h+i}(2^{2(h+i)+1}-1)\left(\frac{p}{2}\right)^{2i} \equiv 0 \bmod p^{n-1}.\end{equation}

\noindent Comparing \eqref{4.5} to \eqref{ee20}, eliminating the terms with powers of the Fermat quotient, rearranging and using $h$ as summation index, we see that we need to prove that
\begin{equation}\label{4.7}\begin{split}
\left(2q_p\delta^{n+1}_p +p Z_{n,\frac{n}{2}}\frac{2^{n+1}-1}{2^n}\right)p^{n-1}\equiv  \sum_{h=1}^\frac{n-2}{2}(2^{2h+1}-1)\left(\frac{p}{2}\right)^{2h}\left(A_hH^{(2h+1)}_{\frac{p-1}{2}}-Z_{n,h}\right)&\\
\bmod { \ p^{n}}.\end{split}\end{equation}

\noindent Now \eqref{4.6} may be rearranged, so that
\begin{align*}H^{(2h+1)}_{\frac{p-1}{2}} \equiv Z_{n,h}
-\sum_{i=0 }^\frac{n-2}{2}\binom{2h+2i}{2h}Z_{n,h+i}(2^{2(h+i)+1}-1)\left(\frac{p}{2}\right)^{2i}  \pmod {p^{n-1}}.
\end{align*}
In Congruence \eqref{4.7}, $H^{(2h+1)}_{\frac{p-1}{2}}$ may be replaced by the right hand side of the latter congruence because (i) $(2^{2h+2}-1)B_{2h+2}$ is $p$-integral, by the Von Staudt-Clausen theorem, and (ii)
$\frac{p^{2h}}{(2h+1)(2h+2)} \equiv 0 \bmod p  $ when $h\ge 1$,  since $\frac{p^{2h}}{(2h+1)(2h+2)}=\frac{p^{2h}}{(2h+1)!}\frac{(2h)!}{2(h+1)}$, where the second factor is clearly an integer while the first one is $0  \bmod p  $ by Lemma \ref{lemjohn}. \\
\ \\
Let $L$ (resp. $R$) be the left (resp. right) hand side of \eqref{4.7}, then we have
\begin{align*}
R &\equiv   \sum_{h=1}^\frac{n-2}{2}(2^{2h+1}-1)\left(\frac{p}{2}\right)^{2h}(A_h -1)Z_{n,h}  \\
& -\sum_{h=1}^\frac{n-2}{2}\sum_{i=0}^\frac{n-2}{2}A_h(2^{2h+1}-1)\binom{2h+2i}{2i}Z_{n,h+i}(2^{2(h+i)+1}-1)\left(\frac{p}{2}\right)^{2i+2h} \pmod {p^n}\\
&\equiv  \sum_{h=1}^\frac{n-2}{2}(2^{2h+1}-1)\left(\frac{p}{2}\right)^{2h}(A_h -1)Z_{n,h}  \\
& -\sum_{h=1}^\frac{n-2}{2}\sum_{i=1}^\frac{n-2}{2}A_h(2^{2h+1}-1)\binom{2h+2i}{2i}Z_{n,h+i}(2^{2(h+i)+1}-1)\left(\frac{p}{2}\right)^{2i+2h} \\
& -\sum_{h=1}^\frac{n-2}{2}A_h(2^{2h+1}-1)Z_{n,h}(2^{2h+1}-1)\left(\frac{p}{2}\right)^{2h} \pmod {p^n}.
\end{align*}

\noindent Let $k=i+h$, and we re-index the sums in the second line with $h$ and $k$. $k$ runs from $2$ to $n-2$ and since $1\le i,h \le \frac{n-2}{2}$, $h$ must also satisfy $k-\frac{n-2}{2} \le h \le k-1$ so that actually $h$ runs from $\max(1,k-\frac{n-2}{2})$ to $\min(\frac{n-2}{2},k-1)$, and then
\begin{align*}
R &\equiv   \sum_{h=1}^\frac{n-2}{2}(2^{2h+1}-1)\left(\frac{p}{2}\right)^{2h}(A_h -1)Z_{n,h}  \\
& -\sum_{k=2}^{n-2}Z_{n,k}(2^{2k+1}-1)\left(\frac{p}{2}\right)^{2k}\sum_{h=\max(1,k-\frac{n-2}{2})}^{\min(\frac{n-2}{2},k-1)}\binom{2k}{2h}A_h(2^{2h+1}-1)\\
& -\sum_{h=1}^\frac{n-2}{2}A_h(2^{2h+1}-1)Z_{n,h}(2^{2h+1}-1)\left(\frac{p}{2}\right)^{2h} \pmod {p^n}.
\end{align*}
\noindent We now argue that, in the above congruence, $k$ may be limited to $k\le\frac{n}{2}$. Indeed, $p$ may divide the denominator of $\binom{2k}{2h}Z_{n,k}$ once at most, because $$\binom{2k}{2h}Z_{n,k}=\frac{1}{2h}\binom{2k-1}{2h-1}B_{p^{n-1}(p-1)-2k}$$ and $p$ does not divide $h$ because $p >\frac{n+1}{2} \ge h+\frac{3}{2}$. On the other hand, $p$ may also divide the denominator of $4 \frac{(2^{2h+2}-1)B_{2h+2}}{(2h+1)(2h+2)}$ once at most, since $(2^{2h+2}-1)B_{2h+2}$ is $p$-integral, $p$ does not divide $h+1$, since $p>h+1$ and if we suppose $p^g$ is the largest power of $p$ that divides $2h+1$, then $p^g \le 2h+1 \le n-2+1=n-1<2p-1-1$ that is $p^g <2p-2$, which is possible only if $g\le1$. Then
\begin{align*}
R &\equiv   \sum_{k=1}^\frac{n-2}{2}(2^{2k+1}-1)\left(\frac{p}{2}\right)^{2k}(A_k -1)Z_{n,k}\\
&-\sum_{k=2}^{\frac{n}{2}}Z_{n,k}(2^{2k+1}-1)\left(\frac{p}{2}\right)^{2k}\sum_{h=1}^{k-1}\binom{2k}{2h}A_h(2^{2h+1}-1)\\
& -\sum_{k=1}^\frac{n-2}{2}A_k(2^{2k+1}-1)Z_{n,k}(2^{2k+1}-1)\left(\frac{p}{2}\right)^{2k} \pmod {p^n}.
\end{align*}
\noindent We now evaluate the inner sum in the second line of the above congruence:
\begin{align*} \sum_{h=1}^kA_h(2^{2h+1}-1)\binom{2k}{2h}&=\sum_{h=1}^k4 \frac{(2^{2h+2}-1)B_{2h+2}}{(2h+1)(2h+2)}(2^{2h+1}-1)\binom{2k}{2h}\\
&=\sum_{h=0}^k4 \frac{(2^{2h+2}-1)B_{2h+2}}{(2h+1)(2h+2)}(2^{2h+1}-1)\binom{2k}{2h}-4 \frac{(2^{2}-1)B_{2}}{2}.\end{align*}

\noindent That is
\begin{align*}
\sum_{h=1}^kA_h(2^{2h+1}-1)\binom{2k}{2h}&=\sum_{h=0}^k4 \frac{(2^{2h+2}-1)B_{2h+2}}{(2h+1)(2h+2)}(2^{2h+1}-1)\binom{2k}{2h}-1\\
&=\frac{1}{2k+1}\sum_{h=0}^k4 \frac{(2^{2h+2}-1)B_{2h+2}}{(2h+2)}(2^{2h+1}-1)\binom{2k+1}{2h+1}-1.
\end{align*}

\noindent On the right hand side of the last line, let $j= 2h+1$. $j$ takes all odd values from $1$ to $2k+1$, but we may let $j$ take all values from $0$ to $2k+1$, because the summands corresponding to even $j$ are zero, either because $2^0 -1 =0$ or because of vanishing Bernoulli numbers.\\
\ \\
\noindent Then

\begin{align*}\sum_{h=1}^kA_h(2^{2h+1}-1)\binom{2k}{2h}&=\frac{4}{2k+1}\sum_{j=0}^{2k+1} \frac{B_{j+1}}{(j+1)}(2^{j}-1)(2^{j+1}-1)\binom{2k+1}{j}-1 \\
&=\frac{4}{2k+1}\frac{B_{2k+2}}{2k+2}(2^{2k+2}-1)-1    \text{ \ \ \ by Lemma \ref{2.3}}\\
&=A_k-1.\end{align*}
Then \begin{align*}
\sum_{h=1}^{k-1}A_h(2^{2h+1}-1)\binom{2k}{2h}&=A_k-1-A_k(2^{2k+1}-1).
\end{align*}
\noindent And the latter may now be used into $R$, so that
\begin{align*}  
R&\equiv   \sum_{k=1}^\frac{n-2}{2}(2^{2k+1}-1)\left(\frac{p}{2}\right)^{2k}(A_k -1)Z_{n,k}  \\
& -\sum_{k=2}^\frac{n}{2}Z_{n,k}(2^{2k+1}-1)\left(\frac{p}{2}\right)^{2k}\left(A_k-1-A_k(2^{2k+1}-1)\right)\\
& - \sum_{k=1}^\frac{n-2}{2}A_k(2^{2k+1}-1)Z_{n,k}(2^{2k+1}-1)\left(\frac{p}{2}\right)^{2k} \pmod {p^n}. \end{align*}
That is 
\begin{align*}
R&\equiv  \sum_{k=1}^\frac{n-2}{2}(2^{2k+1}-1)\left(\frac{p}{2}\right)^{2k}(A_k -1)Z_{n,k}  \\
& -\sum_{k=1}^\frac{n-2}{2}Z_{n,k}(2^{2k+1}-1)\left(\frac{p}{2}\right)^{2k}\left(A_k-1-A_k(2^{2k+1}-1)\right)\\
& -\sum_{k=1}^\frac{n-2}{2}A_k(2^{2k+1}-1)Z_{n,k}(2^{2k+1}-1)\left(\frac{p}{2}\right)^{2k} \\
& +7Z_{n,1}\left(\frac{p}{2}\right)^{2}\left(-6A_1-1\right)-Z_{n,\frac{n}{2}}(2^{n+1}-1)\left(\frac{p}{2}\right)^{n}\left(2A_{\frac{n}{2}}(1-2^n)-1\right)\pmod {p^n}.
\end{align*} 

\noindent This can be considerably simplified, since all the sums cancel out and $A_1=-\frac{1}{6}$, so that we are left with
 \begin{align*}  
R&\equiv pZ_{n,\frac{n}{2}}\frac{2^{n+1}-1}{2^n}\left(2A_\frac{n}{2}(2^{n}-1)+1\right) p^{n-1}  \pmod {p^n}.  \end{align*} 
Recall that in order to prove our theorem we just need to show that $L \equiv R \bmod p^n$. So there merely remains to prove that
\begin{align} \label{pp} q_p\delta^{n+1}_p \equiv pZ_{n,\frac{n}{2}}\frac{(2^{n+2}-1)(2^{n+1}-1)(2^{n}-1)B_{n+2}}{2^{n-2}(n+1)(n+2)}\pmod p.  \end{align}
This is obtained as follows. Recall that $ (2^{n+2}-1)B_{n+2}$ is $p$-integral, and first consider the case where $p-1$ does not divide $n$, then $Z_{n,\frac{n}{2}}$ is $p$-integral since $p$ does not divide $n$ otherwise $p \le \frac{n}{2}$ which is excluded by hypothesis. Moreover, $p$ does not divide $n+2$ otherwise $p \le \frac{n}{2}+1$ and $ \frac{n+1}{2} <p$, then $p= \frac{n}{2}+1$, this is impossible since $p-1$ does not divide $n$; and $p$ does not divide $n+1$ either, because $p\le n+1 <2p$ would imply $p=n+1$ which is impossible since $p-1$ does not divide $n$. So we have shown that if $p-1$ does not divide $n$, the right hand side of \eqref{pp} is $0  \bmod p$ and obviously so is the left hand side since in this case $\delta^{n+1}_p =0$. Now, if $p-1$ divides $n$, then since $p>\frac{n+1}{2}$, either $p=n+1$ or $2p=n+2$. If $n=2p-2$, the left hand side of \eqref{pp} is $0$ and by Lemma \ref{2.5}, $pZ_{n,\frac{n}{2}} \equiv  \frac{1}{2} \bmod p$. If $p=3$, $n=4$, the right hand side of \eqref{pp} is $\frac{1}{2}\frac{(2^{6}-1)(2^{5}-1)(2^{4}-1)}{2^{2}(4+1)(4+2)(42)}=\frac{93}{32} \equiv 0 \bmod 3$. If $p\neq 3$, by Kummer congruence \eqref{e2ez},  $\frac{B_{n+2}}{n+2}\equiv \frac{B_2}{2} \bmod p$ then the  the right hand side of \eqref{pp} is $\frac{1}{2}\frac{(2^{2p}-1)(2^{2p-1}-1)(4^{p-1}-1)}{2^{2p-4}(2p-1)12}$  which is $0  \bmod p$, by Fermat little theorem.
If $n=p-1$,  the left hand side of \eqref{pp} is $q_p$ and by Lemma \ref{2.5}, $pZ_{n,\frac{n}{2}} \equiv 1\bmod p$. If $p=3$, $n=2$, $q_3=1$ and the right hand side of \eqref{pp} is $-\frac{(2^{4}-1)(2^{3}-1)(2^{2}-1)}{(2+1)(2+2)(30)}=\frac{-7}{8} \equiv 1= q_3\bmod 3$.  If $p\neq 3$, by Kummer congruence \eqref{e2ez},  $\frac{B_{n+2}}{n+2}\equiv \frac{B_2}{2} \bmod p$ then the right hand side is 
$\frac{(2^{p+1}-1)(2^{p}-1)(2^{p-1}-1)}{2^{p-3}\cdot p\cdot 12} \equiv \frac{2^{p-1}-1}{p} =q_p  \bmod p $.
\end{proof}


\end{document}